\author{Winfried Hochst\"{a}ttler\affiliationmark{1}
  \and Felix Schr\"{o}der\affiliationmark{2}
  \and Raphael Steiner\affiliationmark{2}\thanks{funded by GRK-2434 Facets of Complexity}}
\title[On the Complexity of Digraph Colourings and Vertex Arboricity]{On the Complexity of Digraph Colourings and Vertex Arboricity}
\affiliation{
  FernUniversit\"{a}t in Hagen, Germany\\
  Technische Universit\"{a}t Berlin, Germany}
\keywords{dichromatic number, circular chromatic number, circular vertex arboricity, computational complexity}
\def\n{\par\noindent}
\def\N{\par\par\noindent}
\newtheorem{theorem}{Theorem}
\newtheorem{proposition}{Proposition}
\newtheorem{lemma}{Lemma}
\newtheorem{corollary}{Corollary}
\newtheorem{conjecture}{Conjecture}
\newtheorem{observation}{Observation}
\newtheorem{definition}{Definition}
\newtheorem{problem}{Problem}
\begin{document}
\publicationdetails{22}{2019}{1}{4}{5140}
\maketitle
\begin{abstract}
It has been shown by Bokal et al.~that deciding 2-colourability of
  digraphs is an NP-complete problem. This result was later on
  extended by Feder et al.~to prove that deciding whether a digraph
  has a circular $p$-colouring is NP-complete for all rational $p>1$.
  In this paper, we consider the complexity of corresponding decision
  problems for related notions of fractional colourings for digraphs
  and graphs, including the \emph{star dichromatic number}, the
  \emph{fractional dichromatic number} and the \emph{circular vertex
    arboricity}. We prove the following results:
\begin{itemize}
\item Deciding if the star
  dichromatic number of a digraph is at most $p$ is NP-complete for every rational $p>1$.
\item Deciding if the fractional dichromatic number of a digraph is at most $p$ is NP-complete for every $p>1, p \neq 2$.
\item Deciding if the circular vertex arboricity of a graph is at most $p$ is NP-complete for every rational $p>1$.
\end{itemize}    
  To show these results, different techniques
  are required in each case. In order to prove the first
  result, we relate the star dichromatic number to a new notion of
  homomorphisms between digraphs, called \emph{circular
    homomorphisms}, which might be of independent interest. We provide
  a classification of the computational complexities of the
  corresponding homomorphism colouring problems similar to the one
  derived by Feder et al.~for acyclic homomorphisms.
\end{abstract}

Graphs and digraphs in this paper are considered loopless, but are allowed to have multiple parallel and anti-parallel edges/arcs between vertices. We will refer to edges $e$ in graphs by $uw$ where $u,w$ are the end vertices of $e$, if this does not lead to confusion with parallel edges. Given an arc $e$ of a digraph, we use $e=u \rightarrow w$ or equivalently $e=(u,w)$ to express that $e$ has tail $u$ and head $w$. This is not to be understood as a proper equality but as a statement on the arc $e$. Cycles and paths in graphs and directed cycles and paths in digraphs are always considered without repeated vertices. A cycle of length two in a graph consists of two parallel edges, while a directed cycle of length two (a \emph{digon}) is a pair of anti-parallel arcs in a digraph.

Throughout the paper, whenever we write $a+b$ or $a-b$ for elements $a,b \in \mathbb{Z}_k$, this is meant as on the group $(\mathbb{Z}_k,+)$. In most cases, we will identify the elements of $\mathbb{Z}_k$ with their representatives within $\{0,\ldots,k-1\}$. In addition, for every $k \in \mathbb{N}$ and
elements $x,y \in \{0,\ldots,k-1\} \simeq \mathbb{Z}_k$, let $\textrm{dist}_k(x,y):=|(x-y)
\textrm{mod } k|_k,$ where $|a|_k:=\min\{|a|,|k-a|\},$ for all
$a=0,\ldots,k-1$, denote the {\em circular $k$-distance between $x$ and~$y$}. For elements $x$ of $\mathbb{Z}_k$ or $\mathbb{Z}$ we will use $x \textrm{ mod }k$ to denote the unique element within $\{0,\ldots,k-1\} \subseteq \mathbb{Z}$ equivalent to $x$ modulo $k$. \\
Circular colourings of graphs were introduced by
\cite{vince}, where the concept of the \emph{star chromatic number},
nowadays also known as the \emph{circular chromatic number} of a
graph, made its first appearance.
The original definition of the star chromatic number by Vince is
based on so-called $(k,d)$-colourings, where colours at adjacent
vertices are not only required to be distinct as usual but moreover
'far apart' in the following sense:

\begin{definition}[\cite{vince}]
  Let $G$ be a graph and $(k,d) \in \mathbb{N}^2, k \ge d$. A
  {\em $(k,d)$-colouring of $G$} is an assignment $c:V(G)
  \rightarrow \{0,\ldots,k-1\} \simeq \mathbb{Z}_k$ of colours to the
  vertices so that $\textrm{dist}_k(c(u),c(w)) \ge d$ whenever $u,w$ are
  adjacent. \n
  The \emph{circular chromatic number} $\chi_c(G) \ge 1$ of the graph $G$ is defined as the infimum over all values of $\frac{k}{d}$ for which $(k,d)$-colourings exist. 
\end{definition}
The following canonical construction related to the circular chromatic number will be used in Section \ref{ComplVertA}.
\begin{definition} \label{circulant}
For any given natural numbers $(k,d) \in \mathbb{N}^2$ with $k \ge 2d$, we denote by $C(k,d)$ the circulant graph with vertex set $\mathbb{Z}_k$ where vertices $i \neq j \in \mathbb{Z}_k$ are adjacent if and only if $\textrm{dist}_k(i,j) \ge d$. 
\end{definition}
As was observed in Theorem 6 by \cite{vince}, it holds that $\chi_c(C(k,d))=\frac{k}{d}$ and $\chi(C(k,d))=\lceil \frac{k}{d} \rceil$.

In this paper, we focus on fractional colourings related to (directed) cycles in graphs and digraphs and determine the computational complexities of natural decision problems for various fractional colouring parameters. For details on circular colourings of graphs we refer to the survey article of \cite{xuding}.
\paragraph{Circular Chromatic Number of Digraphs}
Given a natural number $k \ge 1$, a \emph{$k$-colouring} of a digraph $D$ with $k$ colours is defined to be an assignment $c:V(D) \rightarrow \{0,\ldots,k-1\}$ with the property that there is no monochromatic directed cycle, i.e., $c^{-1}(i)$ induces an acyclic subdigraph of $D$ for every $i \in \{0,\ldots,k-1\}$. The minimal number of colours required to colour a digraph $D$ in this way is defined to be the \emph{dichromatic number}~$\vec{\chi}(D)$. This notion, introduced by \cite{ErdNeuLara} and \cite{NeuLara} generalises graph colourings and has been studied in numerous papers until today with still a lot of natural problems remaining unresolved.  \n
Circular colourings of digraphs were introduced by \cite{bokal} as a finer distinction between digraphs with the same dichromatic number by allowing fractional values. Instead of integers, they allow real numbers as colours in their definition: \n Given a real number $p \ge 1$, consider a plane-circle $S_p$ of perimeter $p$ and define a \emph{weak circular $p$-colouring} as a colour-map $c:V(D) \rightarrow S_p$, such that equal colours at both ends of an arc, i.e., $c(u)=c(w)$ where $e=(u,w) \in E(D)$, are allowed, but at the same time, the clockwise distance from $c(u)$ to $c(w)$ on $S_p$ is at least $1$ whenever they are distinct. Moreover, each so-called colour class, i.e., $c^{-1}(t), t \in S_p$, has to induce an acyclic subdigraph of~$D$. \n
The circular dichromatic number $\vec{\chi}_c(D)$ now is defined as the infimum over all values $p \ge 1$ providing weak circular $p$-colourings of~$D$. This infimum can be proved to be always attained as a minimum. \n
For any natural numbers $k \ge d \ge 1$, denote by $\vec{C}(k,d)$ the digraph with vertex set $\mathbb{Z}_k$ in which there is an arc $(i,j)$ for $i,j \in \mathbb{Z}_k$ if and only if $(j-i) \textrm{ mod }k \ge d$. The following sums up the most basic properties of the circular dichromatic number and these special circulant digraphs.
\newpage
\begin{theorem}[\cite{bokal}, \cite{mastert}] \label{PropertiesCircularDi}
Let $D$ be a digraph. Then the following holds:
\begin{enumerate}
\item[(i)] $\vec{\chi}_c(D) \ge 1$ is a rational number with numerator at most~$|V(D)|$.
\item[(ii)] $\lceil \vec{\chi}_c(D) \rceil=\vec{\chi}(D)$, i.e., $\vec{\chi}_c(D) \in (\vec{\chi}(D)-1,\vec{\chi}(D)]$.
\item[(iii)] $\vec{C}(k,d)$ admits circular dichromatic number exactly $\frac{k}{d}$ for any $k \ge d \in \mathbb{N}$.
\item[(iv)] Any digraph $D$ is weakly circularly $p$-colourable for $p \ge 1$ if and only if for every $(k,d) \in \mathbb{N}^2$ with $\frac{k}{d} \ge p$, $D$ admits a colouring $c_{k,d}:V(D) \rightarrow \mathbb{Z}_k$ with the following properties: For any arc $(u,w) \in E(D)$, either $c_{k,d}(u)=c_{k,d}(w)$ or $(c_{k,d}(w)-c_{k,d}(u)) \textrm{ mod }k \ge d$, and $c_{k,d}^{-1}(i)$ is acyclic for every $i \in \mathbb{Z}_k$.
\end{enumerate}
\end{theorem}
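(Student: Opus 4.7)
The plan is to treat the four items in the order (iv), (iii), (ii), (i), since (iv) gives a discrete $(k,d)$-reformulation that simplifies the rest. For (iv), in the forward direction I would rescale and round: given a weak circular $p$-coloring $c: V(D) \to S_p$ and $(k,d)$ with $k/d \ge p$, identify $S_p \subseteq S_{k/d}$ and set $c_{k,d}(v) := \lfloor d \cdot c(v) \rfloor \in \mathbb{Z}_k$. The clockwise-distance-at-least-$1$ condition then translates to $(c_{k,d}(w) - c_{k,d}(u)) \bmod k \ge d$ on distinct-color arcs (with only boundary rounding to check), and acyclicity of $c_{k,d}^{-1}(i)$ follows because it refines the continuous acyclic classes. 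Conversely, from a sequence $k_n/d_n \searrow p$ I would extract a sublimit of the rescaled $c_{k_n,d_n}$ in $(S_p)^{V(D)}$ by compactness and check that the weak $p$-constraints survive the limit --- acyclicity transfers because any cycle in a limit class is already present at some finite~$n$.

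For (iii), the upper bound is witnessed by $c(i) := i/d \in S_{k/d}$, whose color classes are singletons. For the lower bound, consider a weak circular $p$-coloring $c$ and the directed cycle $C = (0, d, 2d, \ldots)$ of length $\ell := k/\gcd(k,d)$: the total clockwise displacement around $C$ is a positive multiple of $p$ (positivity from acyclicity, since a zero total forces $C$ monochromatic), and summing over the $\gcd(k,d)$ translates of $C$ that partition $V(\vec{C}(k,d))$ forces $p \ge k/d$. For (ii), $\vec{\chi}_c(D) \le \vec{\chi}(D)$ is immediate by placing a proper $\vec{\chi}(D)$-coloring at integer positions of $S_{\vec{\chi}(D)}$. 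For the reverse, let $p := \lceil \vec{\chi}_c(D) \rceil \ge 2$ (the $p=1$ case is acyclicity), fix a weak circular $p$-coloring $c$, and set $f(v) := \lfloor c(v) \rfloor$. On any arc $u \to w$ inside $f^{-1}(i)$, both colors lie in a common unit interval, and the distance-at-least-$1$ condition forbids $c(u) < c(w)$, so $c(u) \ge c(w)$ (the wraparound distance exceeds $p - 1 \ge 1$). Hence directed cycles in $f^{-1}(i)$ must be $c$-constant, contradicting acyclicity of $c^{-1}(\cdot)$, so $f$ is a proper $p$-coloring.

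For (i), take an optimal $c$ at $p = \vec{\chi}_c(D)$ and form the auxiliary digraph $H$ whose arcs are the tight arcs of $D$ (clockwise $c$-distance exactly $1$) together with both orientations of $c$-equal pairs. A rotation argument shows that $H$ must contain a closed directed walk wrapping around $S_p$ an integer number $d \ge 1$ of times via some $k$ arcs --- otherwise an $H$-closed proper subset $X \subsetneq V(D)$ could be rigidly rotated to shrink $p$, contradicting optimality. Such a walk yields $p = k/d$, and reducing it to a simple closed walk forces $k \le |V(D)|$. The step I expect to be the main obstacle, both in (iii) and in (i), is the winding-number bookkeeping --- rigorously tracking how the per-arc contributions in $\{0\} \cup [1, p)$ aggregate around cycles and extracting the exact rational $k/d$ with the correct numerator bound; a cleaner alternative for (i) is to use the discrete characterization (iv) and reduce to a finite LP whose basic feasible solutions correspond to short closed walks in $D$.
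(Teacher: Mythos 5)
First, a point of reference: the paper does not prove Theorem~\ref{PropertiesCircularDi} at all --- it is quoted as known background from \cite{bokal} and \cite{mastert} --- so there is no in-paper proof to compare against and your argument has to stand on its own. Your part (ii) and the upper bounds in (iii) and (iv) are correct, and the ``arcs inside a short interval are weakly decreasing, so a directed cycle there must be monochromatic'' argument you deploy in (ii) is exactly the right tool throughout.

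The genuine gap is the lower bound in (iii). The cycle $C=(0,d,2d,\ldots)$ and its $\gcd(k,d)$ translates use only the arcs $i\to i+d$; together these form a disjoint union of $\gcd(k,d)$ directed cycles of length $\ell=k/\gcd(k,d)$, a subdigraph whose circular dichromatic number is only $\frac{\ell}{\ell-1}$, which is far below $\frac{k}{d}$ in general. Concretely, in $\vec{C}(5,2)$ the jump-by-$2$ arcs form a single directed $5$-cycle, which by itself admits a weak circular $\frac{5}{4}$-colouring, so no bookkeeping of displacements along those five arcs can force $p\ge\frac{5}{2}$. Quantitatively, your summation only yields $\gcd(k,d)\cdot p\le\sum(\textrm{displacements})<kp$, which is vacuous; in the optimal colouring the winding number of each translate is $d/\gcd(k,d)$ rather than $1$, and bounding it from \emph{above} (equivalently, exploiting the long-jump arcs $i\to i+j$ for $j>d$) is where the real work lies. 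The standard route to (iii) is instead via the acyclic-homomorphism characterisation of Proposition~\ref{homcolbojan}. Two smaller issues: in (iv), $c_{k,d}^{-1}(i)=c^{-1}\bigl([i/d,(i+1)/d)\bigr)$ is a \emph{coarsening}, not a refinement, of the continuous colour classes, and a union of acyclic sets need not be acyclic --- you need the same monotonicity argument as in (ii) to show that preimages of intervals of length at most $1$ are acyclic. Finally, your (i) presupposes that the infimum is attained and explicitly defers the winding-number bookkeeping, so it is a plan rather than a proof; the LP route through the discrete characterisation (iv) that you mention at the end is indeed the cleaner way to finish that part.
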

\paragraph{Graph Homomorphisms and Acyclic Homomorphisms}
Given a pair of graphs $G,H$, a graph homomorphism from $G$ to $H$ is a mapping $\phi:V(G) \rightarrow V(H)$ which preserves adjacency. It is well-known that graph homomorphisms generalise graph colourings in the following way: Given a fixed graph $H$, for any graph $G$, an $H$-colouring is defined to be a graph homomorphism $\phi:V(G) \rightarrow V(H)$. The \emph{$H$-colouring problem} then asks for a given graph $G$ whether it is $H$-colourable. If we take $H$ to be the complete graph on $k$ vertices, this is just the $k$-colouring problem for graphs which is known to be polynomially solvable for $k=2$ and NP-complete for $k \ge 3$. It was a long-standing open problem to determine the complexity of $H$-colourability for arbitrary graphs~$H$. This was finally resolved by \cite{Hcolouring} who proved the following:
\begin{theorem}[\cite{Hcolouring}] \label{nesetril}
The $H$-colouring problem is polynomially solvable if $H$ is bipartite, and it is NP-complete if $H$ is non-bipartite.
\end{theorem}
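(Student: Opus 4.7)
For the easy direction, suppose $H$ is bipartite. If $H$ has no edges then $H$-colourings exist exactly for edgeless $G$. Otherwise $H$ admits a homomorphism to $K_2$ (project onto its bipartition), and conversely $K_2$ admits a homomorphism into $H$ (via any edge). Since homomorphisms compose, $G$ is $H$-colourable if and only if it is $2$-colourable, which is decidable in linear time by breadth-first search.

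For the hard direction, assume $H$ is non-bipartite. The plan is to reduce from a known NP-complete colouring problem. The base case $H = K_3$ is NP-complete via the classical reduction from $3$-\textsc{Sat}. For a general non-bipartite $H$, I would use the \emph{indicator construction}: pick an auxiliary graph $I$ with distinguished vertices $s,t$ and define a derived graph $H^\star$ on $V(H)$ with $uw \in E(H^\star)$ if and only if $I$ admits a homomorphism to $H$ sending $s \mapsto u$ and $t \mapsto w$. Then for any input graph $G$, the graph $G^I$ obtained by replacing each edge of $G$ by a copy of $I$ (identifying $s$ with one endpoint and $t$ with the other) is $H$-colourable if and only if $G$ is $H^\star$-colourable. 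By choosing $I$ carefully, one aims for $H^\star$ to be a simpler non-bipartite graph whose colouring problem is already known to be NP-hard, yielding a polynomial-time reduction from $H^\star$-colourability to $H$-colourability.

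The main obstacle is showing that such an indicator exists for \emph{every} non-bipartite $H$. First, if $H$ has odd girth $2k+1 \geq 5$, one cannot simulate $K_3$ directly inside $H$, so a separate NP-hardness proof for $C_{2k+1}$-colourability is needed; this itself requires odd-cycle-specific gadgets, which one would establish by induction on $k$. Second, designing indicators for arbitrary $H$ requires a delicate analysis of $H$'s homomorphism structure, and one typically combines the indicator construction with a dual \emph{sub-indicator construction} that passes to the subgraph of $H$ induced by vertices satisfying some prescribed homomorphism property. Proving that these two operations together are expressive enough to reduce to an NP-hard base case for every non-bipartite $H$ is the technical heart of the argument, and I would expect the case distinction to hinge on fine invariants of $H$ such as its odd girth and the structure of its girth-realising cycles.
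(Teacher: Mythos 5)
This statement is the Hell--Ne\v{s}et\v{r}il dichotomy, which the paper imports from \cite{Hcolouring} without proof, so there is no in-paper argument to compare against; your proposal has to stand on its own. The easy direction is complete and correct: for bipartite $H$ with an edge, the two compositions $H \to K_2$ and $K_2 \to H$ show $H$-colourability coincides with $2$-colourability, and the edgeless case is trivial. (Parallel edges, which the paper permits, change nothing here.)

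The hard direction, however, is a roadmap rather than a proof, and the part you defer is precisely the theorem. You correctly identify the machinery of the original argument --- the indicator construction, its dual sub-indicator construction, and the need for a separate NP-hardness proof for odd cycles $C_{2k+1}$ with $k \ge 2$ --- but you do not exhibit a single indicator for any non-bipartite $H$ beyond $K_3$, nor the odd-cycle gadgets, nor the global argument (in Hell and Ne\v{s}et\v{r}il's proof, a minimal-counterexample analysis over all non-bipartite cores) showing that these two operations always terminate at an NP-hard base case. Phrases such as ``by choosing $I$ carefully, one aims for $H^\star$ to be a simpler non-bipartite graph'' and ``I would expect the case distinction to hinge on fine invariants'' are statements of intent, not verified steps; in particular, nothing guarantees that iterating indicators and sub-indicators strictly simplifies $H$ or preserves non-bipartiteness of $H^\star$ without the detailed case analysis. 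As written, the proposal establishes the polynomial-time half and reduces the NP-hardness half to an unproven claim that is essentially equivalent to the theorem itself.
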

It is natural to ask for a definition of homomorphisms acting on digraphs which resembles digraph colourings in a similar way. One such notion which has received quite some attention in past years are \emph{acyclic homomorphisms}. Given a pair $D_1,D_2$ of digraphs, an acyclic homomorphism from $D_1$ to $D_2$ is defined to be a mapping $\phi:V(D_1) \rightarrow V(D_2)$ with the property that for any arc $(u,w)$ in $D$, either $\phi(u)=\phi(w)$ or $(\phi(u),\phi(w))$ is an arc in $D_2$, and additionally, for every vertex $v \in V(D_2)$, the vertex set $\phi^{-1}(v)$ induces an acyclic subdigraph of~$D_1$. The following statement describes the relation of (circular) digraph colourings and acyclic homomorphisms and shows that for digraph colourings, the circulant digraphs $\vec{C}(k,d)$ as defined above take the role of the complete graphs for usual graph colourings. 
\begin{proposition}[\cite{bokal}] \label{homcolbojan}
Let $p=\frac{k}{d} \ge 1$ be a rational number. Then for any digraph $D$, we have $\vec{\chi}_c(D) \leq p$ if and only if there is an acyclic homomorphism mapping $D$ to the digraph~$\vec{C}(k,d)$.
\end{proposition}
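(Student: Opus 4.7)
The plan is to reduce the proposition almost entirely to Theorem~\ref{PropertiesCircularDi}, specifically parts (iii) and (iv). The key observation is that a map $\phi:V(D)\to \mathbb{Z}_k=V(\vec{C}(k,d))$ is an acyclic homomorphism $D\to \vec{C}(k,d)$ if and only if it satisfies exactly the two properties of the colouring $c_{k,d}$ from Theorem~\ref{PropertiesCircularDi}(iv): the arc-distance condition translates directly because an ordered pair $(i,j)$ with $i\neq j$ is an arc of $\vec{C}(k,d)$ precisely when $(j-i)\bmod k\ge d$, and the fibre-acyclicity condition is literally the same in both definitions. I would state this equivalence as a one-line warm-up.

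The direction $\vec{\chi}_c(D)\le p=\tfrac{k}{d}\Rightarrow$ existence of an acyclic homomorphism is then immediate: Theorem~\ref{PropertiesCircularDi}(iv) applied to the pair $(k,d)$ (which of course satisfies $\tfrac{k}{d}\ge p$) furnishes a colouring $c_{k,d}$, and by the above observation this colouring, viewed as a map into $V(\vec{C}(k,d))$, is exactly the desired acyclic homomorphism.

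For the converse I would establish the natural monotonicity lemma: if there is an acyclic homomorphism $\phi:D_1\to D_2$, then $\vec{\chi}_c(D_1)\le \vec{\chi}_c(D_2)$. Applied to $D_1=D$ and $D_2=\vec{C}(k,d)$, combined with $\vec{\chi}_c(\vec{C}(k,d))=\tfrac{k}{d}$ from Theorem~\ref{PropertiesCircularDi}(iii), this yields $\vec{\chi}_c(D)\le p$. To prove the lemma, take a weak circular $q$-colouring $c:V(D_2)\to S_q$ realising $\vec{\chi}_c(D_2)=q$ and consider $c\circ\phi$. The arc condition is routine: every arc of $D_1$ either gets contracted by $\phi$, in which case its endpoints receive equal colours (which is allowed in a weak circular colouring), or maps to an arc of $D_2$ whose endpoints satisfy the clockwise-distance condition inherited from $c$. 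The only real verification is that every colour class $(c\circ\phi)^{-1}(t)=\phi^{-1}(c^{-1}(t))$ induces an acyclic subdigraph of $D_1$. Suppose it contained a directed cycle $C$; if $\phi$ is constant on $C$ we contradict the acyclicity of the fibre $\phi^{-1}(v)$, and otherwise contracting consecutive equal values of $\phi$ along $C$ yields a non-trivial closed directed walk in $D_2$ whose vertices all lie in $c^{-1}(t)$, forcing a directed cycle inside the colour class $c^{-1}(t)$ of $D_2$—contradicting that $c$ is a weak circular colouring. This cycle-projection bookkeeping is the only step that is not a pure unfolding of definitions and is where I would be most careful, but it is standard and presents no real obstacle.
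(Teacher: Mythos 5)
Your argument is correct. Note, however, that the paper offers no proof of Proposition~\ref{homcolbojan} at all: it is imported verbatim from the cited work of Bokal et al., so there is nothing in the text to compare your route against. On its own merits your proposal is sound: the observation that the conditions defining the colouring $c_{k,d}$ in Theorem~\ref{PropertiesCircularDi}(iv) are a literal restatement of the definition of an acyclic homomorphism into $\vec{C}(k,d)$ is exactly right, the forward direction follows (you implicitly use that weak circular $p$-colourability is monotone in $p$, or equivalently you should apply (iv) at the level $\vec{\chi}_c(D)$ rather than at $p$ -- a one-line point worth making explicit since (iv) is stated as an equivalence at a fixed $p$), and the converse via the monotonicity lemma for acyclic homomorphisms together with $\vec{\chi}_c(\vec{C}(k,d))=\tfrac{k}{d}$ from (iii) is clean. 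The cycle-projection step in your lemma is the same bookkeeping the paper itself carries out in the proof of Proposition~\ref{acyclich} (a closed directed trail through at least two vertices contains a directed cycle), so it is consistent with the paper's conventions and presents no gap.
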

The question of determining the complexity of the decision problem whether or not $\vec{\chi}_c(D) \leq p$ for different real numbers $p \ge 1$ was raised by \cite{bokal} and answered in \cite{bojannp} for rational values of $p$ in form of a much more general statement which can be seen as a variant of Theorem~\ref{nesetril} for acyclic homomorphisms:
\begin{theorem}[\cite{bojannp}] \label{mainbojan}
Let $F$ be a digraph. Then the $F$-colouring problem, i.e., the problem of deciding whether a given digraph $D$ admits an acyclic homomorphism to $F$, is polynomially solvable if $F$ is acyclic and NP-complete otherwise. 
\end{theorem}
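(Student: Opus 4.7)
The proof naturally splits into two parts, with membership in NP being immediate: any candidate map $\phi\colon V(D)\to V(F)$ is a polynomial-size certificate whose arc-preservation (up to collapse) and acyclicity of preimages are verifiable in polynomial time.

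For the polynomial direction, I would argue that when $F$ is acyclic, a digraph $D$ admits an acyclic homomorphism to $F$ if and only if $D$ itself is acyclic, a property testable in linear time by DFS. If $D$ is acyclic, any constant map is a valid acyclic homomorphism. Conversely, suppose $\phi\colon D\to F$ is an acyclic homomorphism and $C=v_1v_2\cdots v_\ell v_1$ is a directed cycle in $D$. Either $\phi$ is constant along $C$, in which case $\phi^{-1}(\phi(v_1))$ contains $C$ and so is not acyclic, contradicting the definition; or, suppressing consecutive repetitions in the image sequence $\phi(v_1),\dots,\phi(v_\ell),\phi(v_1)$ yields a non-trivial closed walk in $F$, forcing $F$ to contain a directed cycle and contradicting its acyclicity.

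For the NP-hardness direction, my plan would be to reduce from a known NP-hard base case. The hardness of $\vec{C}(2,1)$-colourability (digraph $2$-colourability) by Bokal et al. immediately handles the case where $F$ contains a digon; for general non-acyclic $F$, I would locate a shortest directed cycle $\vec{C}_k$ in $F$ and reduce either from $2$-colourability or from the Feder--Hell--Mohar hardness of $\vec{C}(k,d)$-colourability. Adapting the sub-indicator technique of Hell--Nešetřil underlying Theorem~\ref{nesetril} to the acyclic-homomorphism setting, I would construct a gadget $X_F$ built from $F$ with the rigidity property that, in every acyclic homomorphism $X_F\to F$, certain designated input vertices are forced onto the chosen short cycle and split into only a controlled number of equivalence classes. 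Gluing a copy of $X_F$ along each arc of the input digraph $D$ would yield $D'$ with $D'\to F$ acyclically iff $D$ satisfies the chosen base colouring problem.

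The main obstacle is the design of $X_F$ and the proof of its rigidity, which must hold uniformly over all non-acyclic $F$. Showing that no acyclic homomorphism of $D'$ can escape the intended short cycle by using other vertices of $F$ without creating a directed cycle in some preimage requires a careful case analysis on the structure of $F$ around its shortest directed cycle; in particular, one would likely first retract $F$ to a suitable \emph{core} and then separately treat the digon case and the case of longer shortest cycles.
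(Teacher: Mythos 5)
Your two easy directions are sound and coincide with the standard treatment: membership in NP is immediate, and for acyclic $F$ your argument that an acyclic homomorphism either collapses a directed cycle of $D$ into a single fibre or projects it to a closed directed walk in $F$ is exactly the content of Proposition~\ref{acyclich} together with the observation preceding Conjecture~\ref{conj}, so $D\to F$ acyclically iff $D$ is acyclic, decidable in linear time.

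The hardness direction, however, contains a genuine gap: the gadget $X_F$ and its rigidity property, which you yourself flag as ``the main obstacle'', are never constructed, and they are the entire substance of the theorem. Your proposed base cases are also problematic: the hardness of $\vec{C}(k,d)$-colourability is Corollary~\ref{subbojan}, which is deduced \emph{from} the theorem you are proving, so invoking it would be circular. The actual argument (mirrored in this paper's proof of Theorem~\ref{main}) runs the reduction the other way around: one reduces the Hell--Ne\v{s}et\v{r}il $H$-colouring problem for the auxiliary \emph{graph} $H_F$ on $V(F)$ --- where $u$ and $v$ are adjacent iff they lie on a common directed cycle of length $k$, the digirth of $F$ --- to the $F$-colouring problem. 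The gadget is then elementary rather than a sub-indicator: fix an acyclic orientation $\vec{G}$ of the instance $G$ and close each arc into a directed $k$-cycle by a reverse path of length $k-1$. The acyclic orientation forces every directed cycle of the resulting digraph to traverse a full attachment path, and the cardinality bound $|V(C(e))|=k=$ digirth forces its image to be precisely the vertex set of a shortest directed cycle of $F$, which is adjacency in $H_F$; no core retraction or case analysis ``around the shortest cycle'' is needed for this part. What your sketch also misses is the genuinely delicate residual case in which $H_F$ is bipartite (e.g.\ when the symmetric part of $F$ is non-empty and bipartite), where Theorem~\ref{nesetril} yields nothing and a separate argument is required --- compare case~(iii) of Theorem~\ref{main}, which handles only $2$-colourable $F$ and is the reason the circular analogue remains open as Conjecture~\ref{conj}, whereas the acyclic-homomorphism theorem of \cite{bojannp} must, and does, cover all non-acyclic $F$.
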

\begin{corollary}[\cite{bojannp}]\label{subbojan}
Given a rational number $p>1$, deciding whether a digraph admits a weak circular $p$-colouring, i.e., $\vec{\chi}_c(D) \leq p$, is NP-complete. 
\end{corollary}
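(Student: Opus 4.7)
The plan is to deduce the corollary directly from Theorem~\ref{mainbojan} and Proposition~\ref{homcolbojan}. Since $p>1$ is rational, write $p=\frac{k}{d}$ with $k,d \in \mathbb{N}$ and $k>d$. By Proposition~\ref{homcolbojan}, for every digraph $D$ we have $\vec{\chi}_c(D)\leq p$ if and only if $D$ admits an acyclic homomorphism to the circulant digraph $\vec{C}(k,d)$. Consequently, the decision problem ``$\vec{\chi}_c(D)\leq p$?'' coincides up to this translation with the $\vec{C}(k,d)$-colouring problem in the sense of Theorem~\ref{mainbojan}.

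To finish, I would verify that $\vec{C}(k,d)$ is not acyclic and then invoke Theorem~\ref{mainbojan}. Non-acyclicity follows immediately from parts (ii) and (iii) of Theorem~\ref{PropertiesCircularDi}: one has $\vec{\chi}_c(\vec{C}(k,d))=\frac{k}{d}>1$, so $\vec{\chi}(\vec{C}(k,d))=\lceil \frac{k}{d}\rceil \geq 2$, which rules out acyclicity. Alternatively, the directed cycle traversing the vertices $0,d,2d,\ldots \pmod{k}$ gives an explicit witness, since each consecutive step spans circular distance at least $d$. Applying Theorem~\ref{mainbojan} to $F=\vec{C}(k,d)$ therefore yields NP-completeness of the $\vec{C}(k,d)$-colouring problem, hence of the original decision problem.

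Since NP-membership is already part of the conclusion of Theorem~\ref{mainbojan} (a candidate acyclic homomorphism to the fixed digraph $\vec{C}(k,d)$ serves as a polynomially verifiable certificate), no additional argument is required. There is no substantive obstacle here: the entire content of the corollary is absorbed by the preceding two statements, and the only thing to be checked by hand is that the target digraph $\vec{C}(k,d)$ one ends up reducing to actually falls on the hard side of the dichotomy in Theorem~\ref{mainbojan}.
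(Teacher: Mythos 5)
Your argument is correct and is exactly the route the paper intends: the corollary is stated as an immediate consequence of Proposition~\ref{homcolbojan} (translating $\vec{\chi}_c(D)\leq k/d$ into the $\vec{C}(k,d)$-colouring problem) and the dichotomy of Theorem~\ref{mainbojan}, with the only check being that $\vec{C}(k,d)$ contains a directed cycle when $k/d>1$, which you verify both via $\vec{\chi}(\vec{C}(k,d))\geq 2$ and with an explicit cycle. The paper gives no separate proof, so there is nothing further to compare.
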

\paragraph{Star Dichromatic Number}
Another related concept of circular colourings of digraphs was introduced by \cite{hochsteiner} under the name of the \emph{star dichromatic number} $\vec{\chi}^\ast(D)$ of a digraph. Again, for a colouring, real numbers associated with a plane circle are used, but instead of looking at circular distances between adjacent vertices, an \emph{acyclic $p$-colouring} of a digraph $D$ for any $p \ge 1$ requires pre-images of cyclic open subintervals of length $1$ to induce acyclic subdigraphs. Alternatively, one may use pairs of integers to define the star dichromatic number:
\begin{definition}[\cite{hochsteiner}] \label{DefDir} Let $D$ be a digraph, $(k,d) \in
  \mathbb{N}^2, k \ge d$. An \emph{acyclic $(k,d)$-colouring} of $D$
  is an assignment $c:V(D) \rightarrow \mathbb{Z}_k$ of colours to the
  vertices such that for every $i \in \mathbb{Z}_k$, the pre-image of
  the cyclic interval $A_i:=\{i,i+1,\ldots,i+d-1\} \subseteq
  \mathbb{Z}_k$ of colours, $c^{-1}(A_i) \subseteq V(D)$, induces an
  acyclic subdigraph of~$D$. The infimum over the values $\frac{k}{d}$ for which an acylic $(k,d)$-colouring exists is defined to be   the \emph{star dichromatic number} $\vec{\chi}^\ast(D) \ge 1$ of $D$. 
\end{definition}
Similar to the circular dichromatic number, the star dichromatic number fulfils a series of natural properties.
\begin{theorem}[\cite{hochsteiner}] \label{StarDichBasic}
Let $D$ be a digraph. Then the following holds:
\begin{enumerate}
\item[(i)] $\vec{\chi}^\ast(D) \ge 1$ is a rational number with numerator at most~$|V(D)|$.
\item[(ii)] $\lceil \vec{\chi}^\ast(D) \rceil=\vec{\chi}(D)$, i.e., $\vec{\chi}^\ast(D) \in (\vec{\chi}(D)-1,\vec{\chi}(D)]$.
\item[(iii)] $\vec{C}(k,d)$ admits star dichromatic number exactly $\frac{k}{d}$ for any $k \ge d \in \mathbb{N}$.
\item[(iv)] For all $k \ge d \in \mathbb{N}$, $D$ admits an acyclic $(k,d)$-colouring if and only if $\vec{\chi}^\ast(D) \leq \frac{k}{d}$.
\end{enumerate}
\end{theorem}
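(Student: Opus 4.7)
The plan is to prove (iv) first and derive (i) and (ii) as consequences, with (iii) handled separately using the symmetry of $\vec{C}(k,d)$. The unifying observation is that, for fixed $(k,d)$, the existence of an acyclic $(k,d)$-colouring is a finite combinatorial condition on maps $V(D)\to\mathbb{Z}_k$.

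For (iv), the ``only if'' direction is immediate from Definition \ref{DefDir}. For the converse, given any acyclic $(k_0, d_0)$-colouring $c_0$ with $k_0/d_0 \le k/d$, I set $c(v) := \lfloor c_0(v) \cdot k/k_0 \rfloor \in \mathbb{Z}_k$ and verify that for each $i \in \mathbb{Z}_k$ the preimage $c^{-1}(A_i)$ lies inside $c_0^{-1}(A_j)$ for some $j \in \mathbb{Z}_{k_0}$: indeed, a cyclic length-$d$ interval in $\mathbb{Z}_k$ pulls back through the scaling to a set of cyclic length at most $d \cdot k_0/k \le d_0$ in $\mathbb{Z}_{k_0}$, which embeds in a length-$d_0$ cyclic interval whose $c_0$-preimage is acyclic by hypothesis. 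The main technical subtlety, which I anticipate to be the central issue here, is tracking the floor and the wrap-around in this rescaling cleanly.

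From (iv), part (ii) follows quickly. The upper bound $\vec{\chi}^\ast(D)\le\vec{\chi}(D)$ is immediate, because any proper dichromatic $\vec{\chi}(D)$-colouring is tautologically acyclic $(\vec{\chi}(D),1)$. For the inequality $\vec{\chi}(D)\le\lceil\vec{\chi}^\ast(D)\rceil$, I partition $\mathbb{Z}_k$ into $\lceil k/d\rceil$ cyclic intervals of length at most $d$ for any given acyclic $(k,d)$-colouring $c$; each induces an acyclic subdigraph, yielding a proper dichromatic $\lceil k/d\rceil$-colouring. For (i), finiteness is inherited from (ii). To bound the numerator and to attain the infimum, I apply an unused-colour reduction: if $c$ is acyclic $(k,d)$ with $k > n := |V(D)|$, some $j\in\mathbb{Z}_k$ is unused and identifying $\mathbb{Z}_k\setminus\{j\}$ with $\mathbb{Z}_{k-1}$ yields an acyclic $(k-1,d)$-colouring. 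This is because each length-$d$ interval in $\mathbb{Z}_{k-1}$ pulls back either to a length-$d$ interval in $\mathbb{Z}_k$, or to a length-$(d+1)$ interval straddling the unused $j$, whose preimage equals that of the corresponding length-$d$ interval after removing $j$. Iterating, I reduce to $k\le n$ and conclude that $\vec{\chi}^\ast(D)$ is a minimum over a finite set of rationals whose numerator is at most $n$.

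For (iii), the upper bound $\vec{\chi}^\ast(\vec{C}(k,d))\le k/d$ is witnessed by the identity colouring $c(i)=i$: within any $A_j=\{j,j+1,\ldots,j+d-1\}$, all arcs of $\vec{C}(k,d)$ go from larger to smaller in the induced linear order of $A_j$, since an arc of length $\ge d$ cannot stay inside $A_j$ in the forward direction, and the induced subdigraph is therefore acyclic. The matching lower bound is the most delicate part; my plan is to exploit the $\mathbb{Z}_k$-vertex-transitivity of $\vec{C}(k,d)$, averaging an arbitrary acyclic $(k',d')$-colouring over the automorphism group to produce a balanced colouring, and then combining pigeon-hole on colour classes with the existence of directed cycles of length $k/\gcd(k,d)$ formed by arcs of length exactly $d$ to force $k'/d'\ge k/d$. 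Making this quantitative argument tight will be the main obstacle I anticipate in completing the theorem.
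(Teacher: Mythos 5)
The paper does not prove this theorem (it is quoted from \cite{hochsteiner}), so your argument has to stand on its own. The rescaling step in (iv) and the interval-partition step in (ii) are the standard arguments and are fine as far as they go, but there is a genuine gap in your proof of (i), and it propagates. The unused-colour reduction is not sound: if $j\in\mathbb{Z}_k$ is unused and a cyclic length-$d$ interval of $\mathbb{Z}_{k-1}\simeq\mathbb{Z}_k\setminus\{j\}$ straddles $j$, then in $\mathbb{Z}_k$ it corresponds to a length-$(d+1)$ interval with the single element $j$ deleted; this punctured set is \emph{not} a cyclic interval of $\mathbb{Z}_k$, and its preimage is a union of preimages of two disjoint shorter intervals, which need not be acyclic even though each is. Concretely, let $D$ be a digon on $\{u,w\}$ and take $k=5$, $d=2$, $c(u)=1$, $c(w)=3$: every length-$2$ interval has a preimage of at most one vertex, so this is a valid acyclic $(5,2)$-colouring with colour $2$ unused, yet deleting colour $2$ sends $u,w$ to the adjacent colours $1,2$ of $\mathbb{Z}_4$, and the interval $\{1,2\}$ then pulls back to the digon. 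So the reduction does not yield an acyclic $(k-1,d)$-colouring, and with it falls your proof that the infimum is attained with numerator at most $|V(D)|$. This matters beyond (i): the ``if'' direction of (iv) needs \emph{some} acyclic $(k_0,d_0)$-colouring with $k_0/d_0\le k/d$ to exist whenever $\vec{\chi}^\ast(D)\le k/d$, and the bound $\vec{\chi}(D)\le\lceil\vec{\chi}^\ast(D)\rceil$ in (ii) needs $\lceil k_0/d_0\rceil=\lceil\vec{\chi}^\ast(D)\rceil$ for some admissible pair; both silently use the attainment you were trying to establish, so your ordering (iv)$\Rightarrow$(ii)$\Rightarrow$(i) is circular once the reduction breaks.

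The lower bound in (iii) is also not actually proved: ``averaging a colouring over the automorphism group'' is not a well-defined operation on colourings, and the pigeon-hole endgame is left open. A short correct route is double counting. Every acyclic vertex set of $\vec{C}(k,d)$ has at most $d$ elements (it must be contained in $d$ consecutive vertices, as in the proof of Proposition \ref{ckd}). Given an acyclic $(k',d')$-colouring $c$ of $\vec{C}(k,d)$, each of the $k'$ intervals $A_i\subseteq\mathbb{Z}_{k'}$ satisfies $|c^{-1}(A_i)|\le d$, while each of the $k$ vertices lies in exactly $d'$ of these preimages; hence $kd'=\sum_i|c^{-1}(A_i)|\le k'd$, i.e.\ $k'/d'\ge k/d$. (Equivalently, $y_v\equiv 1/d$ is feasible for the dual program (\ref{dual}), so $\vec{\chi}^\ast(\vec{C}(k,d))\ge\vec{\chi}_f(\vec{C}(k,d))\ge k/d$ by Theorem \ref{relations}.) For (i) itself you need a different argument for attainment and the numerator bound---for instance the tight-colouring/shifting argument used for the circular chromatic number---rather than deletion of unused colours.
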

Although the star dichromatic number and the circular dichromatic number have a similar definition, they may behave very differently. While the star dichromatic number is immune to the addition of sinks and sources (as directed cycles may never pass them), this may have a significant effect on the circular dichromatic number, see Figure \ref{starcirc} for an illustration.

One of the by-products of the Section \ref{star} will be a notion of
homomorphisms for digraphs, so-called \emph{circular homomorphisms},
which are appropriate for generalising the star dichromatic number in
the same way acyclic homomorphisms generalise circular digraph
colourings.
\paragraph{Fractional Dichromatic Number}
The last notion of fractional colourings for digraphs we want to discuss here is the \emph{fractional dichromatic number} of a digraph $D$, denoted by $\vec{\chi}_f(D) \ge 1$. As its analogue for graphs, the well-known \emph{fractional chromatic number} $\chi_f(G)$ of a graph $G$, it may be defined as the optimal value of a linear program. Here, acyclic vertex sets play the role of independent vertex sets.
\begin{figure}[h] 
\centering
\includegraphics[scale=1]{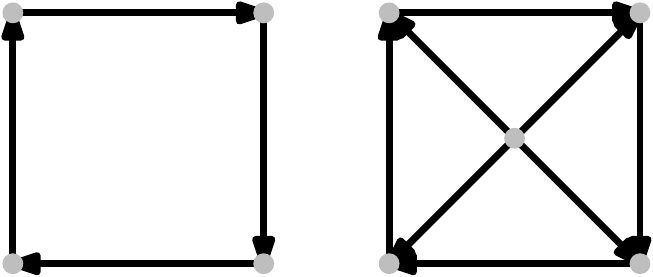}
\caption{Left: The directed cycle $\vec{C}_4$, which has fractional, star and circular dichromatic number $\frac{4}{3}$. While the addition of a dominating source does not change the fractional and the star dichromatic number, the circular dichromatic number jumps to $2$ (Right).}\label{starcirc}
\end{figure}
\begin{definition}[\cite{doct} and \cite{frdichr}] \noindent
  Let $D$ be a digraph. Denote by $\mathcal{A}(D)$ the collection of
  vertex subsets of $D$ inducing an acyclic subdigraph, and for each
  $v \in V(D)$, let $\mathcal{A}(D,v) \subseteq \mathcal{A}(D)$ be the
  subset containing only those sets including~$v$.  The
  \emph{fractional dichromatic number} $\vec{\chi}_f(D)$ of $D$ is now
  defined as the value of 
 \begin{align} \label{primal}
\min &\sum_{A \in \mathcal{A}(D)}{x_A}\\
\textrm{  subj.\ to }&
\sum_{A \in \mathcal{A}(D,v)}{x_A} \ge 1, \textrm{ for all } v \in V(D) \cr
&x \ge 0.\nonumber
\end{align}
\end{definition} 
The fractional dichromatic number has received some attention in recent years, it has e.g.\ shown useful for proving a fractional version of the so-called \emph{Erd\H{o}s-Neumann-Lara-conjecture} (\cite{frdichr}) or in the light of acyclic homomorphisms (\cite{doct}).\n
By dualising the linear program defining $\vec{\chi}_f(D)$, we get the following alternative definition:
\begin{proposition}[\cite{hochsteiner}]\label{propdual}
The fractional dichromatic number of a digraph $D$ can be computed as the optimal value of
\begin{align}\label{dual}
\max &\sum_{v \in V}{y_v} \\
\textrm{ subj. to } &
\sum_{v \in A}{y_v} \leq 1, & \textrm{ for all } A \in \mathcal{A}(D) \nonumber\\
&y \ge 0.\nonumber
\end{align}
\end{proposition}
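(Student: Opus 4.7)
The plan is to derive the stated dual program via standard linear programming duality and then to invoke strong duality to conclude equality of the optima. Associate to each primal covering constraint in \eqref{primal}, indexed by $v\in V(D)$, a dual variable $y_v\ge 0$. Since a primal variable $x_A$ appears with coefficient $1$ in the constraint for vertex $v$ exactly when $v\in A$, and with coefficient $0$ otherwise, the dual constraint corresponding to $x_A$ reads $\sum_{v\in A} y_v \le 1$. Because the primal is a minimisation with $\ge$-constraints and non-negative variables, its dual is a maximisation with $\le$-constraints and non-negative variables, whose objective is $\sum_{v\in V(D)} y_v$. This is exactly the program \eqref{dual}.

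To conclude that the optimal values of \eqref{primal} and \eqref{dual} coincide, I would verify that both linear programs are feasible, so that strong LP duality applies. The primal is feasible by setting $x_{\{v\}}:=1$ for every $v\in V(D)$ (each singleton vertex induces an acyclic subdigraph) and $x_A:=0$ otherwise; this yields a solution of value $|V(D)|$, so $\vec{\chi}_f(D)$ is well-defined and finite. The dual is trivially feasible via $y\equiv 0$, and the dual constraints coming from the singletons $A=\{v\}\in\mathcal{A}(D)$ give $y_v\le 1$, bounding the dual objective by $|V(D)|$. Strong linear programming duality then yields equality of the two optima, establishing the proposition.

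There is no genuine obstacle to overcome here; Proposition~\ref{propdual} is essentially a textbook application of LP duality. The only minor point worth checking is that $\mathcal{A}(D,v)\neq\emptyset$ for every $v\in V(D)$, without which the primal constraint for $v$ would be infeasible; this holds trivially since $\{v\}\in\mathcal{A}(D,v)$. With this verification in hand, the derivation of \eqref{dual} from \eqref{primal} via dualisation and strong duality is immediate.
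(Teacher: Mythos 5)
Your proposal is correct and matches the paper's approach: the paper states Proposition~\ref{propdual} as obtained ``by dualising the linear program defining $\vec{\chi}_f(D)$'' and cites the reference without further argument, and your derivation of \eqref{dual} as the LP dual of \eqref{primal}, together with the feasibility checks (singletons for the primal, $y\equiv 0$ for the dual) needed to invoke strong duality, is exactly the intended standard argument.
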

 The following puts the three presented fractional digraph colouring parameters in relation and establishes direct relations to corresponding notions for graphs. We need the following notation: For a given graph $G$, we denote by $S(G)$ its symmetric orientation, i.e., the digraph obtained from $G$ by replacing each edge by an antiparallel pair of arcs. 
\begin{theorem}[\cite{hochsteiner}] \label{relations}
\noindent
\begin{itemize}
\item[(i)] Let $D$ be a digraph. Then $\vec{\chi}_f(D) \leq \vec{\chi}^\ast(D) \leq \vec{\chi}_c(D)$.
\item[(ii)] For any graph $G$, we have $\vec{\chi}^\ast(S(G))=\vec{\chi}_c(S(G))=\chi_c(G)$ and $\vec{\chi}_f(S(G))=\chi_f(G)$.
\end{itemize}
\end{theorem}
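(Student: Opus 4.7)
The plan is to prove (i) as a chain of two inequalities using different techniques, and then to deduce (ii) by exploiting the special structure of $S(G)$. The main obstacle I expect is the middle inequality $\vec{\chi}^\ast(D) \leq \vec{\chi}_c(D)$, which requires a combinatorial argument on directed cycles confined to an interval of colours.

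For $\vec{\chi}_f(D) \leq \vec{\chi}^\ast(D)$, I would pick an acyclic $(k,d)$-colouring $c:V(D) \rightarrow \mathbb{Z}_k$ witnessing $\vec{\chi}^\ast(D) = k/d$ via Theorem~\ref{StarDichBasic}(iv), and build a primal solution of (\ref{primal}) by assigning $x_{S_i} := \frac{1}{d}$ to each of the $k$ acyclic sets $S_i := c^{-1}(A_i)$ and zero to all other acyclic sets. Since every vertex $v$ with $c(v)=j$ belongs to exactly the $d$ sets $S_{j-d+1}, \ldots, S_j$, the covering constraints are met with equality, and the objective value is $k \cdot \frac{1}{d} = \frac{k}{d}$, yielding $\vec{\chi}_f(D) \leq k/d$. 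For the harder inequality $\vec{\chi}^\ast(D) \leq \vec{\chi}_c(D)$, I would invoke the discrete characterisation of Theorem~\ref{PropertiesCircularDi}(iv) to obtain a $(k,d)$-colouring $c$ with $k/d = \vec{\chi}_c(D)$ and argue that the same map is already an acyclic $(k,d)$-colouring. Assuming the contrary, let $v_1 \rightarrow v_2 \rightarrow \cdots \rightarrow v_n \rightarrow v_1$ be a directed cycle in $c^{-1}(A_i)$, and write $c(v_j) = (i+x_j) \bmod k$ with $x_j \in \{0,\ldots,d-1\}$. Since both endpoints of every arc lie in the $d$-element interval $A_i$, a strict forward step $x_{j+1} > x_j$ would produce a difference in $\{1,\ldots,d-1\}$, which is excluded by the arc condition $(c(v_{j+1})-c(v_j)) \bmod k \in \{0\} \cup \{d,\ldots,k-1\}$. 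Hence $x_{j+1} \leq x_j$ along the entire cycle, and closing it collapses all $x_j$ to a common value. The cycle is thus monochromatic, contradicting the acyclicity of the colour class $c^{-1}(i+x_1)$ guaranteed by the circular colouring.

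For (ii), the key observation is that each edge of $G$ becomes a digon in $S(G)$, so the acyclic vertex subsets of $S(G)$ coincide exactly with the independent sets of $G$. Substituting this into (\ref{primal}) identifies the linear program defining $\vec{\chi}_f(S(G))$ with the standard LP for $\chi_f(G)$ and therefore $\vec{\chi}_f(S(G)) = \chi_f(G)$. For the circular identities I would first observe that any $(k,d)$-colouring of $G$ in Vince's sense automatically satisfies both the arc condition and the acyclicity of colour classes in $S(G)$ (colour classes are independent sets of $G$, hence carry no arcs in $S(G)$), yielding $\vec{\chi}_c(S(G)) \leq \chi_c(G)$; combined with part (i) this gives $\vec{\chi}^\ast(S(G)) \leq \chi_c(G)$. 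Conversely, an acyclic $(k,d)$-colouring $c$ of $S(G)$ cannot have $\textrm{dist}_k(c(u),c(w)) < d$ on any edge $uw$ of $G$, since otherwise both endpoints would lie in a common interval $A_i$ and the digon between $u$ and $w$ would obstruct acyclicity of $c^{-1}(A_i)$. Hence $c$ is itself a $(k,d)$-colouring of $G$, which closes the chain $\chi_c(G) \leq \vec{\chi}^\ast(S(G)) \leq \vec{\chi}_c(S(G)) \leq \chi_c(G)$ to equalities throughout.
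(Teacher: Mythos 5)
The paper does not prove Theorem~\ref{relations}; it is imported from \cite{hochsteiner} without proof, so there is nothing internal to compare against. Your argument is correct and is essentially the standard one: the LP cover by the $k$ interval-preimages (each vertex covered $d$ times), the monotonicity argument showing a weak circular $(k,d)$-colouring has no directed cycle inside an interval preimage, and the identification of acyclic sets of $S(G)$ with independent sets of $G$ all check out; the only cosmetic point is that in the first step distinct indices $i$ may yield the same set $S_i$, in which case the weights $\frac{1}{d}$ should be summed on that variable of (\ref{primal}), which changes nothing.
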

\paragraph{Circular Vertex Arboricity}
The counterpart of digraph colourings for undirected graphs is known as the \emph{vertex arboricity}. Given some $k \in \mathbb{N}$, a \emph{$k$-tree-colouring} of a (multi-)graph $G$ is defined to be a colouring of the vertices of $G$ using colours $\{0,\ldots,k-1\}$ such that there are no monochromatic cycles, i.e., $G[c^{-1}(i)]$ is a forest for any $i \in \{0,...,k-1\}$. The \emph{vertex arboricity} $\textrm{va}(G)$, which was introduced by \cite{chartrand1}, then denotes the minimal number of colours required for a tree-colouring of the graph $G$. Since its introduction, this parameter, which is closely related to the Hamiltonicity of planar graphs, has been widely studied. Similar to the notions of circular colourings of graphs and digraphs, it is also possible to investigate a circular version of the vertex arboricity, which was introduced by \cite{fracvert} under the name \emph{circular vertex arboricity}. For this purpose, the notion of a \emph{$(k,d)$-tree-colouring} of a graph $G$ is defined. Similarly to acyclic $(k,d)$-colourings of digraphs, this is a mapping $c:V(G) \rightarrow \mathbb{Z}_k$ with the property that for any cyclic subinterval $A_i:=\{i,i+1,\ldots,i+d-1\}$ of $\mathbb{Z}_k$, the subgraph of $G$ induced by $c^{-1}(A_i)$ is a forest. The circular vertex arboricity $\textrm{va}_c(G)$ of the graph $G$ is now defined as the infimum of the values $\frac{k}{d}$ for which a $(k,d)$-tree-colouring exists. The circular vertex arboricity has the following basic properties.
\begin{theorem}[\cite{fracvert}]
Let $G$ be a (multi-)graph. Then the following holds:
\begin{enumerate}
\item[(i)] $\textrm{va}_c(G) \ge 1$ is a rational number with numerator at most~$|V(G)|$.
\item[(ii)] $\lceil \textrm{va}_c(G) \rceil=\textrm{va}(G)$, i.e., $\textrm{va}_c(G) \in (\textrm{va}(G)-1,\textrm{va}(G)]$.
\item[(iii)] For all $k \ge d \in \mathbb{N}$, $D$ admits a $(k,d)$-tree-colouring if and only if $\textrm{va}_c(G) \leq \frac{k}{d}$.
\end{enumerate}
\end{theorem}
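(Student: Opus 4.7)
The plan is to prove (iii) first, since it makes (ii) immediate and underlies a compactness argument for (i).

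For (iii), the forward direction is automatic. For the converse, I would suppose $G$ admits a $(k',d')$-tree-colouring $c' : V(G) \to \mathbb{Z}_{k'}$ with $k'/d' \le k/d$, and define the stretched colouring $c : V(G) \to \mathbb{Z}_k$ by $c(v) := \lfloor k c'(v)/k' \rfloor$. For any length-$d$ cyclic interval $A_i \subseteq \mathbb{Z}_k$, a vertex satisfies $c(v) \in A_i$ precisely when $c'(v)$ lies in a real interval of length $dk'/k \le d'$, whose integer points form a cyclic interval of length at most $d'$ in $\mathbb{Z}_{k'}$. Since the $c'$-preimage of any length-$d'$ cyclic interval induces a forest and subgraphs of forests are forests, $c$ is a valid $(k,d)$-tree-colouring.

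For (ii), the inequality $\textrm{va}_c(G) \le \textrm{va}(G)$ is seen by reading any $\textrm{va}(G)$-tree-colouring as a $(\textrm{va}(G),1)$-tree-colouring. For the strict bound $\textrm{va}_c(G) > \textrm{va}(G)-1$, if instead $\textrm{va}_c(G) \le \textrm{va}(G)-1$, then (iii) applied with $(k,d)=(\textrm{va}(G)-1,1)$ yields an ordinary tree-colouring using $\textrm{va}(G)-1$ colours, contradicting the definition of $\textrm{va}(G)$.

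For (i), let $n := |V(G)|$ and consider the finite nonempty set $S := \{k/d : G \text{ has a }(k,d)\text{-tree-colouring with } k \le n\}$, which contains $n/1$ since an injective colouring is trivially a tree-colouring. Let $\alpha := \min S$; it is rational with numerator at most $n$ and satisfies $\textrm{va}_c(G) \le \alpha$. To prove the reverse inequality, I would argue by contradiction: suppose there exists a $(k,d)$-tree-colouring with $k/d < \alpha$, and pick one minimizing $k$. If $k \le n$, then $k/d \in S$ contradicts the definition of $\alpha$; so $k > n$, and pigeonhole gives an unused colour. The plan is then to construct a $(k^*, d^*)$-tree-colouring with $k^* < k$ and $k^*/d^* \le k/d$, contradicting the minimality of $k$. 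The main obstacle is this reduction step, which faces a wrap-around issue: naively dropping an unused colour and relabelling converts one length-$d$ cyclic interval in $\mathbb{Z}_{k-1}$ into the preimage of a length-$(d+1)$ interval in $\mathbb{Z}_k$, which is not a priori forest-inducing. I would bypass this by identifying a maximal block $B$ of consecutive unused colours and collapsing $B$ at once while simultaneously decreasing $d$ proportionally to $|B|$, verifying by direct calculation that $k^*/d^* \le k/d$ is preserved.
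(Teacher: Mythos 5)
First, a point of reference: the paper does not prove this theorem at all --- it is quoted as a known background result from the cited work on circular vertex arboricity, so there is no in-paper argument to compare against. Judged on its own merits, your stretching construction for the converse of (iii) is correct: the $c$-preimage of a length-$d$ cyclic interval of $\mathbb{Z}_k$ under $v \mapsto \lfloor k c'(v)/k' \rfloor$ is the $c'$-preimage of at most $\lceil dk'/k\rceil \le d'$ consecutive colours of $\mathbb{Z}_{k'}$, hence a subgraph of a forest; and (ii) follows as you say. One logical caveat: the hypothesis of the converse of (iii) is $\textrm{va}_c(G)\le k/d$, which is a statement about an infimum; to obtain a witness $(k',d')$-tree-colouring with $k'/d'\le k/d$ you already need to know the infimum is attained, i.e.\ part (i). So the correct order is: stretching lemma, then (i), then (iii) and (ii).

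The genuine gap is the reduction step in (i). Collapsing a maximal block $B$ of $b$ consecutive unused colours to pass from $(k,d)$ to $(k^*,d^*)=(k-b,d^*)$ faces an unavoidable dilemma. To not increase the ratio you need $d^* \ge d(k-b)/k$, i.e.\ $d^*=d-\lfloor bd/k\rfloor$; but a set of $d^*$ colours that are consecutive \emph{after} excising $B$ and that straddle the excised position corresponds in $\mathbb{Z}_k$ to a cyclic interval of length $d^*+b=d-\lfloor bd/k\rfloor+b$, which exceeds $d$ whenever $b>\lfloor bd/k\rfloor$ --- essentially always, since $d<k$. The original colouring gives no forest guarantee for preimages of intervals longer than $d$, so validity is not preserved. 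Concretely, with $k=10$, $d=3$ and $B=\{9,0\}$ unused, the colours $\{7,8,1\}$ become consecutive after collapsing, but their preimage equals that of the length-$5$ interval $\{7,8,9,0,1\}$, about which a $(10,3)$-tree-colouring says nothing. The opposite choice $d^*=d-b$ preserves validity but increases the ratio, since $(k-b)/(d-b)>k/d$ exactly because $k>d$. So no ``collapse and rescale $d$'' calculation of the kind you propose can close the argument. The known proofs of attainment (following Vince and Bondy--Hell for the circular chromatic number, adapted in the cited source) instead use a tightness argument: in an extremal colouring every colour must be \emph{tight} in the sense that shifting its class is blocked by an actual cycle, and chaining these tight constraints around $\mathbb{Z}_k$ produces a closed structure in $G$ that forces $k\le|V(G)|$. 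Some argument of this kind is needed; the unused-colour pigeonhole alone does not suffice.
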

\section{Outline}
The paper is divided into three sections, studying the complexity of decision problems for the \emph{Star Dichromatic Number}, the \emph{Fractional Dichromatic Number} respectively the \emph{Circular Vertex Arboricity} as defined above. The following main results are proved:

\begin{theorem} \label{cometogether}
\noindent
\begin{itemize}
\item For any fixed rational number $p>1$, deciding whether a given (multi-)digraph $D$ fulfills $\vec{\chi}^\ast(D) \leq p$ is NP-complete. 
\item For any fixed real number $p>1, p \neq 2$, deciding whether a given (multi-)digraph $D$ fulfills $\vec{\chi}_f(D) \leq p$ is NP-complete.
\item For any fixed rational number $p>1$, deciding whether a given (multi-)graph $G$ fulfills $\textrm{va}_c(G) \leq p$ is NP-complete.
\end{itemize}
\end{theorem}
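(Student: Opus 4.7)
The three assertions differ in flavour and are best proved separately; each reduces from a known NP-hard problem, but with a different organizing principle.

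For the star dichromatic number I would follow the template of Theorems~\ref{nesetril} and~\ref{mainbojan}. Concretely, I would introduce a notion of \emph{circular homomorphism} between digraphs and establish, in analogy with Proposition~\ref{homcolbojan}, that for every rational $p = k/d$ the inequality $\vec{\chi}^\ast(D) \leq p$ holds if and only if $D$ admits a circular homomorphism into $\vec{C}(k,d)$. Then I would prove a full dichotomy classification of circular $F$-colourability: identify the (small) class of targets $F$ for which the problem is polynomial and establish NP-completeness for every remaining $F$, either by a direct reduction from $3$-SAT or by reduction from an appropriate acyclic $F'$-colouring problem. Since every $\vec{C}(k,d)$ with $k/d > 1$ lies in the non-trivial class, the first assertion follows. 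The third assertion is proved in parallel but in the undirected world, with the circulant graphs $C(k,d)$ of Definition~\ref{circulant} playing the role of $\vec{C}(k,d)$: one reduces from the known NP-hardness of integer vertex arboricity and uses gadgets built from $C(k,d)$ to calibrate the effective circular arboricity to any prescribed rational $p > 1$.

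The second assertion calls for a different approach, because $\vec{\chi}_f$ is the optimum of a linear program rather than a homomorphism invariant. Here the plan is to design explicit gadget digraphs whose fractional dichromatic number is tightly controlled, and to attach them to the input digraph of an NP-hard decision problem. For $1 < p < 2$, Theorem~\ref{relations}(ii) reduces matters to the known NP-hardness of deciding $\chi_f(G) \leq p$ via the symmetric-orientation operator $G \mapsto S(G)$. For $p > 2$, one reduces from a suitable integer dichromatic-threshold problem, using a gadget whose LP optimum shifts the fractional value to exactly the right side of $p$. The excluded value $p = 2$ reflects a genuine border case in which the decision problem admits a polynomial-time treatment.

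The main obstacle will be the second part. The dichotomy arguments for the first and third parts are, once the right homomorphism concept has been pinned down, essentially combinatorial in nature. By contrast, the LP definition of $\vec{\chi}_f$ means that small local changes to a digraph can shift the optimum in unpredictable ways, so the gadgets used in the reduction must be rigid enough to pin the LP value down at the desired threshold. Producing such gadgets, and verifying that they behave correctly across the whole parameter range $p > 1$, $p \neq 2$, is the technically most demanding step.
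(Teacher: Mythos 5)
The most serious problem is in your treatment of the fractional dichromatic number: you have the two ranges exactly backwards, and the reduction you propose for $1<p<2$ cannot work. Every loopless graph with at least one edge satisfies $\chi_f(G)\ge 2$, so for $p\in(1,2)$ the question ``$\chi_f(G)\le p$?'' merely asks whether $G$ is edgeless and is trivially polynomial; there is no hardness to transfer through $G\mapsto S(G)$. The symmetric-orientation reduction is precisely what disposes of the \emph{easy} range $p>2$, where deciding $\chi_f(G)\le p$ is known to be NP-hard, and the genuinely difficult range $1<p<2$ is the one for which your proposal contains no working idea. The missing device is an $l$-split operation $D\mapsto D_l$ (replace every vertex by a directed path on $l$ vertices), for which one proves the exact identity $\vec{\chi}_f(D_l)=\frac{l\,\vec{\chi}_f(D)}{(l-1)\vec{\chi}_f(D)+1}$ via the dual linear program; this monotone map sends the hard thresholds $p'>2$ onto thresholds $p\in\bigl(\frac{2l}{2l-1},\frac{l}{l-1}\bigr)$, and these intervals cover all of $(1,2)$ as $l$ ranges over the integers $\ge 2$. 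Your closing claim that $p=2$ ``admits a polynomial-time treatment'' is also unsupported: that case is simply left open, and the authors in fact conjecture NP-completeness there as well.

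Two further points. For the star dichromatic number your outline does match the paper's strategy (circular homomorphisms, an analogue of Proposition~\ref{ckd}, reduction from $H$-colouring), except that a full dichotomy for circular $F$-colourability is neither needed nor available --- it is only a conjecture; it suffices to prove NP-completeness when $F$ is digon-free, when its symmetric part contains an odd cycle, or when $F$ is $2$-colourable, which already covers every $\vec{C}(k,d)$ with $k/d>1$. For the circular vertex arboricity your plan rests on a false parallel: there is no homomorphism characterisation of $(k,d)$-tree-colourings with target $C(k,d)$ analogous to Proposition~\ref{ckd}, and ``calibrating'' integer vertex arboricity to an arbitrary rational threshold is not something a gadget obviously accomplishes. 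The working reduction is instead from $H$-colouring for an auxiliary circulant graph $H(k,d)$, whose vertices $i\neq j$ are adjacent iff they lie in a common set of size $I(k,d)=\lceil k/(k-d)\rceil$ that fits in no cyclic $d$-interval; one replaces each edge of the instance by $2^k$ parallel paths of length $I(k,d)-1$ and uses a pigeonhole argument to force two paths with identical colour sets, whose union is a cycle. The case $p=2$ needs a separate reduction from Hamiltonicity of planar cubic $3$-connected graphs. These concrete ingredients --- the quantity $I(k,d)$, the graph $H(k,d)$, the parallel-path gadget, and the $l$-split --- are the substance of the proof and are absent from your proposal.
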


This theorem answers open questions of \cite{hochsteiner} as well as questions in the context of the work by \cite{fracvert} and naturally extends the results achieved by \cite{bojannp} on
the circular dichromatic number. \n The proof of the NP-hardness in
each case requires different techniques. The notion of \emph{circular
  homomorphisms} acting between digraphs introduced in Section
\ref{star} might be of independent interest.
\section{The Complexity of the Star Dichromatic Number and Circular Homomorphisms} \label{star}
In this section, we deal with decision problems for the star dichromatic number analogous to those considered by \cite{bojannp}. The problem of determining the complexity of the following decision problem was posed by \cite{hochsteiner}:
\begin{problem} \label{probstar}
Let $p \ge 1$ be a fixed rational number. \n
Instance: A (multi-)digraph~$D$. \n Decide whether $\vec{\chi}^\ast(D) \leq p$.
\end{problem}
For $p=1$, the problem is to decide whether $\vec{\chi}^\ast(D)=1$,
which is equivalent to $D$ being acyclic, and this can be solved in
time linear in~$|E(D)|$. 
\N We now introduce \emph{circular homomorphisms} as an extension of
the well-known \emph{acyclic homomorphisms} defined in the previous
section. In the following, a vertex subset in a digraph will be called
\emph{acyclic} if the corresponding induced subdigraph is acyclic.
\begin{definition}\label{circhom}
  Let $D_1,D_2$ be digraphs. A mapping $\phi:V(D_1) \rightarrow
  V(D_2)$ is called a \emph{circular homomorphism}, if for all
  $A \subseteq V(D_2)$ such that $D_2[A]$ is acyclic, $\phi^{-1}(A)$ is acyclic in $D_1$.
  Equivalently, for any directed cycle $C$ in $D_1$, $D_2[\phi(V(C))]$
  contains a directed cycle.
\end{definition}
It is obvious that $D_1$ admits the injection $\textrm{id}|_{V(D_1)}$ as a circular homomorphism to $D_2$ whenever $D_1$ is a subdigraph of $D_2$ and that the composition of two circular homomorphisms remains a circular homomorphism. \n
Note that this definition is one natural way to generalise graph homomorphisms to digraphs, as the former may be characterised by the property that pre-images of \emph{independent} sets remain independent. This similarity is made precise by the following. \begin{proposition} \label{symmetricor}
Let $G_1, G_2$ be graphs and denote by $S(G_1)$, $S(G_2)$ its symmetric orientations. Then a mapping $\phi:V(G_1) \rightarrow V(G_2)$ is a graph homomorphism if and only if it is a circular homomorphism from $S(G_1)$ to~$S(G_2)$.
\end{proposition}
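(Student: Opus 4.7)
The plan is to reduce the circular homomorphism condition between the symmetric orientations $S(G_1)$ and $S(G_2)$ to the classical characterization of graph homomorphisms via preimages of independent sets. The key observation bridging the two worlds is that in any symmetric orientation $S(G)$, the acyclic vertex subsets are precisely the independent sets of $G$.

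First I would establish this observation. If $A \subseteq V(G)$ is independent in $G$, then $S(G)[A]$ contains no arcs at all, so it is trivially acyclic. Conversely, every edge $uw \in E(G)$ produces a digon in $S(G)$, hence a directed cycle of length two; so if $A$ is not independent, $S(G)[A]$ contains this digon and is not acyclic. Thus the collection of acyclic subsets of $V(S(G))$ coincides with the collection of independent subsets of $V(G)$.

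Plugging this into Definition \ref{circhom}, $\phi: V(G_1) \to V(G_2)$ is a circular homomorphism from $S(G_1)$ to $S(G_2)$ if and only if for every independent set $A$ of $G_2$, the preimage $\phi^{-1}(A)$ is independent in $G_1$. It remains to check that this condition is equivalent to $\phi$ being a graph homomorphism. For the forward direction (homomorphism implies preimage-independence), given independent $A \subseteq V(G_2)$ and $u,w \in \phi^{-1}(A)$ with $uw \in E(G_1)$, one has $\phi(u)\phi(w) \in E(G_2)$ with both endpoints in $A$, contradicting independence (since $G_2$ is loopless, $\phi(u) \neq \phi(w)$). For the converse, given any edge $uw \in E(G_1)$, the two-element (or one-element, if $\phi(u)=\phi(w)$) set $\{\phi(u),\phi(w)\}$ would be independent in $G_2$ if $\phi(u)\phi(w) \notin E(G_2)$; but its preimage contains the edge $uw$, contradicting the assumption that preimages of independent sets are independent.

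There is no real obstacle here: the proof is essentially a two-step unpacking of definitions, with the only mildly delicate point being that the case $\phi(u) = \phi(w)$ is handled uniformly because singletons are independent in a loopless graph, which forces $\phi$ to separate adjacent vertices.
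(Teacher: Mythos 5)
Your proof is correct and follows exactly the route the paper takes: identify the acyclic vertex sets of $S(G)$ with the independent sets of $G$ (via the digons created by each edge) and then invoke the characterisation of graph homomorphisms by preimages of independent sets. The paper merely states this in one sentence, whereas you spell out the details; there is no substantive difference.
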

\begin{proof}
  This follows immediately from the characterisations of graph resp.\
  circular homomorphisms in terms of independent resp.\ acyclic vertex
  sets and the fact that for any graph $G$, the acyclic vertex sets of
  $S(G)$ are exactly the independent vertex sets of~$G$.
\end{proof}

The following, which is similar to Proposition \ref{homcolbojan},
reformulates acyclic $(k,d)$-colourings in terms of circular
homomorphisms.
\begin{proposition} \label{ckd}
Let $p=\frac{k}{d} \ge 1, k,d \in \mathbb{N}$. Then $\vec{\chi}^\ast(D) \leq p$, i.e., there is an acyclic $(k,d)$-colouring of $D$, if and only if there is a circular homomorphism from $D$ to~$\vec{C}(k,d)$.
\end{proposition}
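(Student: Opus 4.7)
The plan is to reduce the proposition to a purely structural lemma about $\vec{C}(k,d)$: a vertex subset $A \subseteq \mathbb{Z}_k$ induces an acyclic subdigraph of $\vec{C}(k,d)$ if and only if $A$ is contained in one of the cyclic $d$-intervals $A_i = \{i, i+1, \ldots, i+d-1\}$. Once this lemma is in place, both directions of the equivalence follow immediately: By Theorem \ref{StarDichBasic}(iv), the assertion $\vec{\chi}^\ast(D) \leq k/d$ is equivalent to the existence of a map $c: V(D) \to \mathbb{Z}_k$ such that $c^{-1}(A_i)$ is acyclic in $D$ for every $i$. The circular-homomorphism condition demands the same for \emph{all} acyclic subsets $A$ of $\vec{C}(k,d)$; but by the lemma each such $A$ sits inside some $A_i$, so $c^{-1}(A) \subseteq c^{-1}(A_i)$ is acyclic, and vice versa.

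The easy direction of the lemma is a one-line observation: an arc $(x,y) \in E(\vec{C}(k,d))$ means $(y-x) \bmod k \geq d$, so within $A_i$ (a cyclic block of length~$d$) the only arcs possible are those going \emph{backwards} along the natural linear order $i < i+1 < \cdots < i+d-1$. This makes $\vec{C}(k,d)[A_i]$ a DAG.

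The real work, and the main obstacle, is the converse. Given $A = \{a_1 < a_2 < \cdots < a_m\} \subseteq \mathbb{Z}_k$ that is not contained in any $A_i$, the cyclic gaps $f_j := a_{j+1}-a_j$ (indices mod $m$, with $a_{m+1} := a_1 + k$) all satisfy $f_j \leq k-d$, since otherwise the complement of $A$ would contain a cyclic interval of length $k-d$. I would now define a greedy ``next arc'' function $\sigma: \{1,\ldots,m\} \to \{1,\ldots,m\}$ by letting $\sigma(j)$ be $j+l$ (mod $m$) with $l \geq 1$ the smallest index for which $f_j + f_{j+1} + \cdots + f_{j+l-1} \geq d$. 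The gap bound $f_j \leq k-d$ together with the minimality of $l$ forces the forward distance from $a_j$ to $a_{\sigma(j)}$ to lie in $[d, k-1]$, so $(a_j, a_{\sigma(j)})$ is an arc of $\vec{C}(k,d)[A]$. Iterating $\sigma$ on the finite set $\{1,\ldots,m\}$ must eventually enter a closed orbit, which pulls back to the desired directed cycle in $\vec{C}(k,d)[A]$ (of length $\geq 2$, since $\sigma$ has no fixed points once $d \geq 1$).

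The delicate point in this last step is verifying that the forward distances produced by $\sigma$ never exceed $k-1$ (otherwise we would lose the arcs when passing to $\mathbb{Z}_k$), which is exactly where the hypothesis $f_j \leq k-d$ is used; once this bookkeeping is in place the rest of the argument is routine.
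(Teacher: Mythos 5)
Your proposal is correct, and its overall architecture coincides with the paper's: both reduce the proposition to the structural lemma that a set $A \subseteq \mathbb{Z}_k$ induces an acyclic subdigraph of $\vec{C}(k,d)$ precisely when it lies in a cyclic $d$-interval, and both then read off the equivalence between acyclic $(k,d)$-colourings and circular homomorphisms exactly as you do (your easy direction of the lemma, via backward arcs only, is also the paper's). The one place you genuinely diverge is the hard direction of the lemma. The paper proves the contrapositive you want in a single line: a finite acyclic digraph has a sink $i$, and the out-neighbourhood of $i$ in $\vec{C}(k,d)$ is exactly $\{i+d,\ldots,i+k-1\}$, so $A$ avoiding it forces $A \subseteq \{i,\ldots,i+d-1\}$. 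You instead argue the direct implication constructively: from ``not contained in a $d$-interval'' you deduce all consecutive gaps are at most $k-d$, build the greedy successor $\sigma$, check that each forward distance lands in $[d,k-1]$ and that $\sigma$ has no fixed points, and extract a directed cycle from a closed orbit. I checked your bookkeeping and it goes through (the minimality of $l$ plus the gap bound does keep the sums strictly below $k$, and the no-fixed-point claim follows since a full wrap would force some gap to exceed $k-d$), so your proof is valid; it has the modest virtue of exhibiting an explicit directed cycle inside any ``spread-out'' set, but at the cost of several pages' worth of care that the sink observation renders unnecessary.
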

\begin{proof}
Recall that $\vec{C}(k,d)$ was defined to be the digraph with vertex set $\mathbb{Z}_k \simeq \{0,\ldots,k-1\}$ where there is an arc $(i,j)$ between two elements if and only if $j-i \in \{d,\ldots,k-1\} \subseteq \mathbb{Z}_k$. To prove the claim, we need the following property: A vertex set $A \subseteq \vec{C}(k,d)$ is acyclic if and only if it is contained in a set of $d$ consecutive vertices, i.e., $A \subseteq \{i,i+1,\ldots,i+d-1\} \subseteq \mathbb{Z}_k$ with some $i \in \mathbb{Z}_k$. \n
For the first implication, assume that $A$ is acyclic. Then $\vec{C}(k,d)[A]$ must contain a sink $i \in A$ (i.e., $i$ has no out-neighbors in $A$), which means that none of the vertices $i+d,\ldots,i+k-1 \in \mathbb{Z}_k$ can be contained in $A$, and so $A \subseteq \{i,\ldots,i+d-1\}$. For the reverse, since $\vec{C}(k,d)$ is circulant, it is enough to show that $\{0,\ldots,d-1\} \subseteq \mathbb{Z}_k$ is acyclic. However, by definition, this interval can only contain backward arcs, and so the subdigraph of $\vec{C}(k,d)$ induced by $\{0,\ldots,d-1\}$ admits an acyclic ordering. \n
Consequently, the circular homomorphisms $\phi:V(D) \rightarrow \mathbb{Z}_k=V(\vec{C}(k,d))$ from any digraph $D$ to $\vec{C}(k,d)$ are exactly those mappings for which $\phi^{-1}(\{i,\ldots,i+d-1\})$ is acyclic for all $i \in \mathbb{Z}_k$, and this is just the same as an acyclic $(k,d)$-colouring of~$D$. This proves the claim.
\end{proof}

Furthermore, the well-studied acyclic homomorphisms between digraphs appear as a special case of circular homomorphisms:
\begin{proposition} \label{acyclich}
Let $D_1,D_2$ be two digraphs. Then any acyclic homomorphism $\phi:V(D_1) \rightarrow V(D_2)$ is a circular homomorphism.
\end{proposition}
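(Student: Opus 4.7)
The plan is to verify the set-theoretic definition of a circular homomorphism directly from the definition of an acyclic homomorphism. Let $\phi:V(D_1)\to V(D_2)$ be an acyclic homomorphism and let $A\subseteq V(D_2)$ induce an acyclic subdigraph of $D_2$. I want to show that $\phi^{-1}(A)$ induces an acyclic subdigraph of $D_1$, and I will do this by contradiction: assume there is a directed cycle $C=v_1v_2\cdots v_\ell v_1$ in $D_1[\phi^{-1}(A)]$.

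For each arc $(v_i,v_{i+1})$ of $C$ (indices modulo $\ell$), the defining property of an acyclic homomorphism gives two cases: either $\phi(v_i)=\phi(v_{i+1})$, or $(\phi(v_i),\phi(v_{i+1}))$ is an arc of $D_2$. Moreover, since every $v_i$ lies in $\phi^{-1}(A)$, we have $\phi(v_i)\in A$ for all $i$, so the whole sequence $\phi(v_1),\ldots,\phi(v_\ell),\phi(v_1)$ is a closed walk in the ``extended'' digraph on $A$ where equal consecutive entries are allowed. The first step then splits into two subcases which I would handle separately.

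If all images $\phi(v_i)$ coincide with a single vertex $v\in A$, then $V(C)\subseteq \phi^{-1}(v)$, so $\phi^{-1}(v)$ contains the directed cycle $C$, contradicting the second condition in the definition of an acyclic homomorphism. Otherwise, collapse maximal runs of equal consecutive images; this produces a closed directed walk in $D_2[A]$ of length at least one, which necessarily contains a directed cycle. This contradicts the acyclicity of $D_2[A]$. Either way we reach a contradiction, so $\phi^{-1}(A)$ must be acyclic and $\phi$ is a circular homomorphism. The only mildly delicate point is the bookkeeping for the contraction step, ensuring that after removing ``loops'' produced by equal consecutive values we are still left with genuine arcs of $D_2$, which is exactly what the arc-preservation property of acyclic homomorphisms guarantees.
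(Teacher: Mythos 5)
Your proof is correct and is essentially the paper's argument: both rest on the observations that each arc of the cycle either collapses or maps to an arc of $D_2$, that singleton fibres are acyclic so the image cannot be a single vertex, and that the resulting closed directed walk on at least two vertices contains a directed cycle. You merely phrase it via the preimage-of-acyclic-sets form of the definition and as a contradiction, whereas the paper argues directly with the equivalent cycle-image characterisation.
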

\begin{proof}
Let $C$ be any directed cycle in~$D_1$. We need to show that $\phi(V(C))$ contains the vertex set of a directed cycle. Since pre-images of single vertices under $\phi$ are acyclic in $D_1$, $\phi(V(C))$ needs to contain at least two vertices. Any arc $x \rightarrow y$ on $C$ is either mapped to a single vertex $\phi(x)=\phi(y)$ or to an arc $\phi(x) \rightarrow \phi(y)$ of $D_2$, which implies that $D_2[\phi(V(C))]$ admits a closed directed trail visiting at least two vertices and thus also a directed cycle. This proves the claim.
\end{proof}

However, the reverse of this statement is not true. This follows from
the fact that there are digraphs with
$\vec{\chi}_c(D)>\vec{\chi}^\ast(D)$, Proposition \ref{homcolbojan}
and Proposition \ref{ckd}. Examples of such digraphs are e.g.\ directed
cycles with an additional dominating source (cf. \cite{hochsteiner}), see Figure \ref{starcirc}.

We conclude the discussion of circular homomorphisms with the
following observation, which identifies them as interlacing structures
between digraphs in terms of their star and fractional dichromatic
numbers:
\begin{proposition} \label{homandcol}
  Let $D_1,D_2$ be digraphs such that there is a circular homomorphism
  $\phi:V(D_1) \rightarrow V(D_2)$. Then $\vec{\chi}^\ast(D_1) \leq
  \vec{\chi}^\ast(D_2)$ and $\vec{\chi}_f(D_1) \leq
  \vec{\chi}_f(D_2)$.
\end{proposition}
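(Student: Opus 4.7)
The plan is to prove the two inequalities separately, since they rely on rather different features of the star and fractional dichromatic numbers. In both cases, however, the only property of $\phi$ that gets used is the defining one: every acyclic subset of $V(D_2)$ pulls back to an acyclic subset of $V(D_1)$.

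For $\vec{\chi}^\ast(D_1) \le \vec{\chi}^\ast(D_2)$, I would argue purely functorially, via Proposition \ref{ckd}. By Theorem \ref{StarDichBasic}(i) the star dichromatic number of $D_2$ is rational, so we may write $\vec{\chi}^\ast(D_2) = \tfrac{k}{d}$ with $k,d \in \mathbb{N}$. Proposition \ref{ckd} then supplies a circular homomorphism $\psi : V(D_2) \to V(\vec{C}(k,d))$, and composing with $\phi$ yields a circular homomorphism $\psi \circ \phi : V(D_1) \to V(\vec{C}(k,d))$, since compositions of circular homomorphisms are circular homomorphisms (as noted right after Definition \ref{circhom}). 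A second application of Proposition \ref{ckd} then gives $\vec{\chi}^\ast(D_1) \le \tfrac{k}{d} = \vec{\chi}^\ast(D_2)$.

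For $\vec{\chi}_f(D_1) \le \vec{\chi}_f(D_2)$ I would argue directly with the primal LP~(\ref{primal}) from the definition of $\vec{\chi}_f$. Let $x^\ast \ge 0$ be an optimal solution for $D_2$, so $\sum_{A \in \mathcal{A}(D_2)} x^\ast_A = \vec{\chi}_f(D_2)$. Because $\phi$ is a circular homomorphism, $\phi^{-1}(A) \in \mathcal{A}(D_1)$ for every $A \in \mathcal{A}(D_2)$, and so we may transport $x^\ast$ to a nonnegative vector indexed by $\mathcal{A}(D_1)$ by aggregating along the fibres of $A \mapsto \phi^{-1}(A)$:
\[
x'_{A'} \;:=\; \sum_{\substack{A \in \mathcal{A}(D_2)\\ \phi^{-1}(A)=A'}} x^\ast_A \qquad (A' \in \mathcal{A}(D_1)).
\]
For any $v \in V(D_1)$ the feasibility constraint is then verified by
\[
\sum_{A' \in \mathcal{A}(D_1,v)} x'_{A'} \;=\; \sum_{A \in \mathcal{A}(D_2),\, \phi(v) \in A} x^\ast_A \;\ge\; 1,
\]
and the objective is preserved because $\sum_{A' \in \mathcal{A}(D_1)} x'_{A'}$ just regroups the terms of $\sum_{A \in \mathcal{A}(D_2)} x^\ast_A$ into the (disjoint) fibres of the well-defined map $A \mapsto \phi^{-1}(A)$, so no term is counted twice. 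Thus $x'$ is feasible for the LP defining $\vec{\chi}_f(D_1)$ with objective value $\vec{\chi}_f(D_2)$, which yields the desired inequality.

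I do not expect any serious obstacle. The only point that deserves a moment of care is the pullback argument for $\vec{\chi}_f$: one needs that $A \mapsto \phi^{-1}(A)$ lands in $\mathcal{A}(D_1)$ (which is exactly the defining property of a circular homomorphism) and that aggregating $x^\ast$ along the fibres of this map neither double-counts a variable nor introduces negative entries. Both follow at once from the construction, so no further work is required.
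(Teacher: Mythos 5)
Your proposal is correct and matches the paper's own proof essentially step for step: the star-dichromatic inequality via Proposition \ref{ckd} together with closure of circular homomorphisms under composition, and the fractional inequality by pulling back an optimal primal solution of (\ref{primal}) for $D_2$ and aggregating it over the fibres of $A \mapsto \phi^{-1}(A)$. You merely spell out the feasibility and objective-value checks that the paper leaves as ``easily verified,'' so no further comment is needed.
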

\begin{proof}
The inequality for the star dichromatic number follows from Proposition \ref{ckd} and the fact that the composition of two circular homomorphisms remains a circular homomorphism. The inequality for the fractional dichromatic number can be seen from the definition in terms of the linear program (\ref{primal}) as follows: Given any optimal solution $x' \ge 0$ of the program with respect to $D_2$, define a corresponding instance $x \ge 0$ of the program for $D_1$ by assigning the value 
\begin{displaymath}x_A:=\sum_{\substack{A' \in \mathcal{A}(D_2):\\ \phi^{-1}(A')=A}}{x'_{A'}}\end{displaymath} for every acyclic vertex set $A \in \mathcal{A}(D_1)$. It is now easily verified using the fact that $\phi^{-1}(A') \in \mathcal{A}(D_1)$ for any $A' \in \mathcal{A}(D_2)$, that $x$ is a legal instance of (\ref{primal}) with respect to $D_1$ with 
\begin{displaymath}\vec{\chi}_f(D_1) \leq \sum_{A \in \mathcal{A}(D_1)}{x_A}=\sum_{A' \in \mathcal{A}(D_2)}{x'_{A'}}=\vec{\chi}_f(D_2).\end{displaymath} 
\end{proof}

Given a fixed digraph $F$, any other digraph $D$ will be called
\emph{circularly $F$-colourable} if there exists a circular
homomorphism mapping $D$ to~$F$. The following decision problem, which
can be seen as a directed analogue of the $H$-colouring problem for
graphs then generalises Problem \ref{probstar}.
\begin{problem}
Let $F$ be a fixed (multi-)digraph. \n
Instance: A (multi-)digraph $D$. \n
Decide whether $D$ is circularly $F$-colourable. 
\end{problem}
As in the graph colouring problem, there is a trivial case: Only acyclic digraphs map circularly to acyclic digraphs:
\begin{observation}
The circular $F$-colouring problem is polynomially solvable for any acyclic digraph $F$.
\end{observation}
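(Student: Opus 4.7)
The plan is to reduce the circular $F$-colouring problem with $F$ acyclic to the problem of deciding whether the input digraph $D$ is acyclic, which is well known to be solvable in time linear in $|V(D)|+|E(D)|$ (e.g.\ by topological sorting via depth-first search).

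First I would observe that, by Definition \ref{circhom}, a mapping $\phi:V(D)\to V(F)$ is a circular homomorphism exactly when $\phi^{-1}(A)$ is acyclic in $D$ for every $A\subseteq V(F)$ with $F[A]$ acyclic. Now I would use the assumption that $F$ itself is acyclic in two directions. On the one hand, taking $A=V(F)$, if $\phi$ is a circular homomorphism then $\phi^{-1}(V(F))=V(D)$ must induce an acyclic subdigraph of $D$, so $D$ must be acyclic. On the other hand, if $D$ is acyclic, then every vertex subset of $D$ induces an acyclic subdigraph, so the condition in Definition \ref{circhom} is vacuously satisfied for \emph{any} mapping $\phi:V(D)\to V(F)$; in particular, any constant map is a circular homomorphism (note that $V(F)\neq\emptyset$ may be assumed).

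Combining these two observations shows that, for acyclic $F$, a digraph $D$ admits a circular homomorphism to $F$ if and only if $D$ is acyclic. Hence deciding circular $F$-colourability reduces to deciding acyclicity of $D$, which is polynomial.

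There is essentially no obstacle: the entire content lies in exploiting that the defining condition of a circular homomorphism is stated for \emph{all} acyclic subsets of $V(F)$, and the full set $V(F)$ itself is an acyclic subset precisely because $F$ is acyclic. This forces the pre-image condition on $V(D)$ and collapses the problem to testing acyclicity of $D$.
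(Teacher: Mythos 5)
Your proof is correct and follows exactly the argument the paper intends: the observation is prefaced by the remark that only acyclic digraphs map circularly to acyclic digraphs, which is precisely your two-directional use of Definition \ref{circhom} (taking $A=V(F)$ for necessity, and vacuity of the cycle condition for sufficiency), reducing the problem to an acyclicity test.
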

We conjecture that this simple observation covers already all polynomially solvable cases under the assumption P$\neq$NP. In other words,
\begin{conjecture}\label{conj}
Let $F$ be a digraph which contains a directed cycle. Then the circular $F$-colouring problem is NP-complete. 
\end{conjecture}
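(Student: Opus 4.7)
The plan is to extend the NP-hardness result for $F = \vec{C}(k,d)$ (first bullet of Theorem \ref{cometogether}) to arbitrary digraphs $F$ containing a directed cycle, following the broad strategy that Bokal and Mohar used for acyclic homomorphisms (Theorem \ref{mainbojan}).

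The natural base case is $F = \vec{C}(k,d)$ for natural numbers $k > d \ge 1$: here Proposition \ref{ckd} identifies circular $F$-colourability with the condition $\vec{\chi}^\ast(D) \le k/d$, so NP-hardness follows immediately from the first bullet of Theorem \ref{cometogether} for any rational $p=k/d>1$. For a general digraph $F$, my first attempt would be to find $(k,d)$ with $k/d=\vec{\chi}^\ast(F)$ together with circular homomorphisms $F \to \vec{C}(k,d)$ and $\vec{C}(k,d) \to F$. By Proposition \ref{homandcol} this would make the two colouring problems polynomial-time equivalent, reducing to the base case. Such a mutual retraction need not always exist, so one would also need a separate case analysis in the non-retract situation.

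For the subcase in which $F$ contains a digon, one can exploit Proposition \ref{symmetricor}: if the submultigraph of $F$ spanned by digons has a non-bipartite component $H$, then $G$ having a graph homomorphism to $H$ is equivalent to $S(G)$ having a circular homomorphism to $S(H) \subseteq F$, and NP-hardness follows from Theorem \ref{nesetril}. In the general case one would have to select a shortest cycle $\vec{C}_\ell$ of $F$ and design \emph{indicator gadgets} — intricate webs of overlapping directed cycles — such that every circular homomorphism to $F$ is forced to fold certain distinguished cycles onto $\vec{C}_\ell$ in a prescribed way, thereby encoding the variables and clauses of a $3$-SAT instance (or the edges of a $3$-colourability instance).

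The main obstacle will be the gadget design in this last step. Unlike in the acyclic homomorphism setting, circular homomorphisms impose no constraint whatsoever on single arcs, only the global condition that pre-images of acyclic sets remain acyclic; an arc or even a long directed path of $D$ can in principle be mapped almost arbitrarily into $F$. To force rigidity, every piece of a gadget has to be embedded into sufficiently many overlapping cycles, and it becomes delicate to verify that the resulting digraph admits no unexpected circular homomorphism — especially when $F$ mixes short and long cycles or has additional strongly connected components beyond a canonical circulant. I would expect that fully settling Conjecture \ref{conj} in this generality requires new structural tools about ``circular cores'' of digraphs and a careful induction on the cycle spectrum of $F$, which is why the present paper leaves the statement as a conjecture.
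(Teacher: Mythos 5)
First, a point of framing: the statement you are trying to prove is stated in the paper as Conjecture~\ref{conj} and is \emph{not} proved there. The paper only establishes it in the special cases of Theorem~\ref{main} --- $F$ digon-free, or the symmetric part of $F$ non-bipartite, or $F$ $2$-colourable --- and explicitly leaves open the case where $F$ is not $2$-colourable but its symmetric part is non-empty and bipartite. Your proposal, by its own admission, also does not close that gap, so it cannot be accepted as a proof of the conjecture; at best it should be compared with the partial result of Theorem~\ref{main}.

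Within that partial scope there is one concrete flaw and one missing idea. The flaw is in your digon subcase: Proposition~\ref{symmetricor} does give that $G$ maps homomorphically to $H$ if and only if $S(G)$ maps circularly to $S(H)$, but the decision problem at hand is circular \emph{$F$}-colourability, and a circular homomorphism from $S(G)$ into $F$ is under no obligation to have its image inside the sub(di)graph $S(H)$; the backward direction of your reduction therefore fails. The missing idea is exactly the device the paper uses to repair this: let $k$ be the digirth of $F$, define the auxiliary graph $H_F$ on \emph{all} of $V(F)$ by joining $u\neq v$ when they lie on a common directed cycle of length $k$, and build $D_G$ from an acyclic orientation of $G$ by attaching to each arc a reverse path so that each edge $e$ sits on a directed $k$-cycle $C(e)$ with exactly $k$ vertices. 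Then any circular homomorphism $\phi'$ to $F$ must send $V(C(e))$ onto the vertex set of a directed cycle of $F$ of length at least $k$, while $|\phi'(V(C(e)))|\leq k$, forcing $\phi'$ to be injective on $C(e)$ and its image to be a shortest directed cycle of $F$; this is the rigidity mechanism you were looking for in your ``indicator gadget'' discussion, and it works uniformly in cases (i) and (ii) without any case analysis on retracts. Your mutual-retraction idea does appear in the paper, but only for case (iii), where a $2$-colourable $F$ containing a digon is sandwiched between $\vec{C}_2$ and itself via Proposition~\ref{ckd}. Your diagnosis of why the remaining case is genuinely hard (circular homomorphisms impose no per-arc constraints) is accurate and matches the reason the paper leaves the full conjecture open.
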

Our main result of this section is the following theorem, which shows that this conjecture holds true in almost all the cases. Given a (multi-)digraph, the \emph{symmetric part} of $D$ is defined to be the graph on the same vertex set as $D$ which contains an edge $xy$ if and only if there is an arc from $x$ to $y$ and from $y$ to $x$ in $D$. 
\begin{theorem}\label{main}
Let $F$ be a digraph containing a directed cycle such that at least one of the following holds:
\begin{enumerate}
\item[(i)] The symmetric part of $F$ is empty, i.e., $F$ is digon-free, or
\item[(ii)] The symmetric part of $F$ contains an odd cycle, or
\item[(iii)] $F$ is 2-colourable.
\end{enumerate}
  Then the circular $F$-colouring problem is NP-complete. 
\end{theorem}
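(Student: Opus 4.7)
I verify NP membership first, then prove NP-hardness in the three cases. NP membership: since $F$ is fixed, the family $\mathcal{A}(F)$ of acyclic vertex subsets has constant size, so given any candidate $\phi\colon V(D)\to V(F)$ one can verify in polynomial time that $\phi^{-1}(A)$ induces an acyclic subdigraph of $D$ for every $A\in\mathcal{A}(F)$.

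Cases (iii) and (ii) admit short reductions. For (iii), $\vec{\chi}(F)=2$ gives an acyclic partition $V(F)=A_1\sqcup A_2$, and assuming $F$ contains a digon $v_1v_2$ with $v_i\in A_i$ (the digon-free subcase is subsumed by~(i)), I reduce from the NP-complete problem $\vec{\chi}(D)\leq 2$: sending the two acyclic classes of a 2-colouring of $D$ to $v_1,v_2$ is a circular homomorphism because every acyclic $A\subseteq V(F)$ meets the digon $\{v_1,v_2\}$ in at most one vertex; conversely, pulling back $A_1,A_2$ under any circular homomorphism $D\to F$ yields a 2-colouring. For (ii), let $H$ be the (non-bipartite) symmetric part of $F$; graph $H$-colouring is NP-complete by Theorem~\ref{nesetril}. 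Given $G$, form $S(G)$: by Proposition~\ref{symmetricor}, $G\to H$ is equivalent to a circular homomorphism $S(G)\to S(H)$, which composes with the inclusion $S(H)\hookrightarrow F$ (itself a circular homomorphism, since subdigraphs inherit acyclicity) to yield $S(G)\to F$. Conversely, for any circular homomorphism $\phi\colon S(G)\to F$, each edge $uv$ of $G$ is a directed 2-cycle in $S(G)$ whose image must contain a cycle in the loopless $F$; this forces $\{\phi(u),\phi(v)\}$ to be a digon of $F$, i.e.\ an edge of $H$, so $\phi|_{V(G)}$ is a graph homomorphism.

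Case (i) is the delicate one: $F$ is digon-free and non-acyclic, so acyclic $F$-colouring is NP-complete by Theorem~\ref{mainbojan}, but the converse of Proposition~\ref{acyclich} fails and the two notions are genuinely distinct. My plan is to reduce from acyclic $F$-colouring by modifying the input: for each arc $uv$ of $D$, attach a \emph{girth gadget} consisting of a fresh directed path $v\to w_1\to\cdots\to w_{\ell-2}\to u$ where $\ell\geq 3$ is the girth of $F$, creating a directed $\ell$-cycle through $uv$ in the modified digraph $D^*$. Under any circular homomorphism $D^*\to F$, this cycle maps to a subset of $V(F)$ containing a cycle, which since $\ell$ equals the girth forces the map to be injective on the gadget with image a directed $\ell$-cycle of $F$; in particular $\phi(u)\phi(v)\in E(F)$. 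Combined with the automatic acyclicity of singleton-preimages (the circular homomorphism condition applied to $\{v\}\in\mathcal{A}(F)$), this shows any circular $F$-colouring of $D^*$ restricts to an acyclic $F$-colouring of~$D$.

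The main obstacle is the forward direction of case~(i): extending an arbitrary acyclic $F$-colouring of $D$ to a circular $F$-colouring of $D^*$. The naive girth gadget cannot accommodate arcs $uv$ with $\phi(u)=\phi(v)$, since no directed cycle of length at most $\ell$ through a single vertex exists in a digon-free digraph of girth $\ell$. I would address this either by refining the gadget so that a ``degenerate'' filling exists when the endpoints collapse (perhaps using a longer closed walk in $F$ that revisits the collapsed vertex at the cost of the gadget having length $>\ell$), or by first reducing to a variant of acyclic $F$-colouring that forces distinct colours on arc endpoints. Executing this balance in both directions, and ensuring that the variant reduction preserves NP-hardness for all digon-free $F$ with a directed cycle, constitutes the technical heart of the proof.
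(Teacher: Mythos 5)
Your NP-membership argument and your treatments of cases (ii) and (iii) are correct and essentially coincide with the paper's (the paper folds (ii) into a general gadget construction, but for digirth $k=2$ that construction produces exactly $S(G)$ and the auxiliary graph is exactly the symmetric part, so your version is the same argument; your case (iii) is the paper's equivalence with circular $\vec{C}_2$-colouring phrased directly). The problem is case (i), which you explicitly leave open: your plan to reduce from acyclic $F$-colouring founders, as you note, on arcs whose endpoints receive the same colour, and neither of your proposed repairs works. Lengthening the gadget to a closed walk destroys the backward direction, because the whole point of using a gadget cycle of length exactly the digirth $\ell$ is the cardinality argument ($|\phi(V(C(e)))|\le \ell$ while every directed cycle of $F$ has length $\ge \ell$, forcing the image to be exactly a directed $\ell$-cycle and $\phi$ to be injective on the gadget); with a longer gadget you can no longer conclude $\phi(u)\neq\phi(v)$, let alone $\phi(u)\phi(v)\in E(F)$. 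Your alternative, a ``distinct-endpoints'' variant of acyclic $F$-colouring, is not known to be NP-hard from Theorem~\ref{mainbojan}, and even if it were, distinctness and adjacency of $\phi(u),\phi(v)$ in $F$ would not suffice to fill the gadget: you need $\phi(u)$ and $\phi(v)$ to lie on a \emph{common directed cycle of length exactly $\ell$}, which an arbitrary arc of $F$ need not do.

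The paper resolves this by changing the source problem rather than the gadget. Define the auxiliary \emph{graph} $H_F$ on $V(F)$ in which $u\neq v$ are adjacent iff they lie on a common directed cycle of length $k$, where $k$ is the digirth of $F$. Since $F$ is digon-free, $k\ge 3$ and any shortest directed cycle yields a $k$-clique in $H_F$, so $H_F$ is non-bipartite and graph $H_F$-colouring is NP-hard by Theorem~\ref{nesetril}. One then reduces graph $H_F$-colouring (not acyclic $F$-colouring) to circular $F$-colouring: take an acyclic orientation of the instance $G$ and attach your reverse path of length $k-1$ to each arc. The forward direction now goes through because a graph homomorphism $G\to H_F$ sends adjacent vertices to \emph{distinct} vertices on a common directed $k$-cycle of $F$ by definition of $H_F$, so every gadget can be filled bijectively; your collapsed-endpoint obstruction never arises. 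The backward direction is exactly your cardinality argument. Your case (ii) is in fact this same scheme with $k=2$, which is why it succeeded there; the missing idea for case (i) is the definition of $H_F$ and the appeal to Hell--Ne\v{s}et\v{r}il in place of Theorem~\ref{mainbojan}.
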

\begin{proof}
  We start by observing that the problem is certainly contained in NP:
  Given any digraph $D$, a circular homomorphism from $D$ to $F$ can
  be used as an NP-certificate. Note that the digraph $F$ itself
  defines the colouring problem and is not considered as an instance,
  and so for checking whether a given mapping $\phi:V(D) \rightarrow
  V(F)$ defines a circular homomorphism, it suffices to compute at
  most constantly many inverse images under $\phi$ and
  verify that the corresponding subdigraphs of $D$ are indeed acyclic,
  which can be done in polynomial time in~$|V(D)|$. 
  \begin{enumerate}
  \item[(i), (ii)] Assume that the symmetric part of $F$ is either empty or
    non-bipartite. Let $k$ denote the digirth, i.e., the length of a
    shortest directed cycle, of $F$. We will define a graph $H_F$
    whith vertex set $V(F)$ and in which two vertices $u \neq v$
    are adjacent if and only if there is a directed cycle of length $k$
    containing both $u$ and $v$ in $F$. Similar to the proof of the
    corresponding result by \cite{bojannp} (cf. Theorem
    \ref{mainbojan}), we will give a polynomial reduction of the
    $H_F$-colouring problem to the circular $F$-colouring problem. For a given
    instance $G$ of the $H_F$-colouring problem, we will construct a
    polynomial-sized instance $D_G$ for the circular $F$-colouring problem
    and prove that $G$ is $H_F$-colourable if and only if $D_G$ is
    circularly $F$-colourable. \n If the symmetric part of $F$ is
    empty, then there is a directed cycle of length $k \ge 3$ in $F$,
    which forms a clique of size $k$ in $H_F$ and therefore $H_F$
    contains a triangle. Otherwise, the symmetric part of $H_F$ is
    non-empty but non-bipartite. In this case, we have $k=2$ and the
    vertices of any odd cycle in the symmetric part of $F$ will form
    an odd cycle in $H_F$. In any case, $H_F$ is non-bipartite, and
    thus, the decision problem of $H_F$-colourability is NP-hard
    according to Theorem \ref{nesetril}.  \N Let now $G$ be an
    instance of the $H_F$-colouring problem. As $H_F$ is loopless, we
    may assume that the same holds for $G$ as well. We construct the
    digraph $D_G$ by first choosing some acyclic orientation $\vec{G}$
    of $G$ and then attaching to every arc $x \rightarrow y$
    of $\vec{G}$ a directed path of length $k-1$ in reverse direction
    whose only common vertices with $\vec{G}$ are $x$ and $y$, so that
    each arc $e=x \rightarrow y$ in $\vec{G}$ is contained in a
    directed cycle $C(e)$ of length~$k$ in $D_G$. The set of the $k-2$
    extra vertices added is pairwise disjoint for distinct edges.
    Clearly, this construction is polynomial in the size of $G$. \n
    We now claim that there is a graph homomorphism from $G$ to $H_F$
    if and only if there is a circular homomorphism from $D_G$ to $F$,
    which yields the desired polynomial reduction.  \n For the first
    implication let $\phi:V(G) \rightarrow V(H_F)$ be a graph
    homomorphism mapping $G$ to $H_F$.  Then for any arc
    $e=x \rightarrow y$ in $\vec{G}$, $\phi(x)\phi(y)$ is an edge in
    $H_F$, i.e., there is a directed cycle $C'(e)$ of length
    $k$   containing $\{\phi(x),\phi(y)\}$ in $F$. Moreover, since
    $|V(C(e)) \backslash \{x,y\}|=|V(C'(e)) \backslash
    \{\phi(x),\phi(y)\}|=k-2$, we find that there are bijections $f_e:V(C(e))
    \backslash \{x,y\} \rightarrow V(C'(e)) \backslash
    \{\phi(x),\phi(y)\}$ for every edge $e$ of~$G$.\n Let now
    $\phi':V(D_G) \rightarrow V(F)$ be the mapping defined by
    $\phi'(u):=\phi(u)$ for any $u \in V(G) \subseteq V(D_G)$ and
    $\phi'(u):=f_e(u)$ for any $u \in V(C(e)) \backslash \{x,y\}$ and
    any edge $e=xy \in E(G)$. We claim that this defines a circular
    homomorphism from $D_G$ to $F$: If $C$ is any directed cycle in
    $D_G$, since $\vec{G}$ is an acyclic orientation, $C$ needs to
    contain a full attachment path and thus the vertex set of $C(e)$
    for at least one edge $e \in E(G)$. This implies $\phi'(V(C))
    \supseteq \phi'(V(C(e))=V(C'(e))$ by definition of $\phi'$, and
    thus $C'(e)$ is a directed cycle contained in the image of $V(C)$,
    as required.  \n Conversely, assume there is a circular
    homomorphism $\phi'$ mapping $D_G$ to~$F$. We claim that the
    restriction $\phi:=\phi'|_{V(G)}$ is a graph homomorphism from $G$
    to~$H_F$. For this purpose, let $e=xy$ be any edge of~$G$. Then,
    since $\phi'$ is a circular homomorphism, $\phi'(V(C(e)))$
    contains the vertex set of a directed cycle in $F$, which must
    have length at least~$k$.  However, $|\phi'(V(C(e)))| \leq
    |V(C(e))|=k$ by definition of $C(e)$, so this directed cycle has
    exactly $\phi'(V(C(e)))$ as vertex set, which contains $\phi'(x)$
    and~$\phi'(y)$. Moreover, $\phi'$ restricted to $V(C(e))$ must be
    an injection, and so $\phi'(x) \neq \phi'(y)$. According to the
    definition of $H_F$, this finally implies that
    $\phi(x)\phi(y)=\phi'(x)\phi'(y)$ is an edge of $H_F$, and so
    $\phi$ is indeed a graph homomorphism as required.  This settles
    the poof in the case where the symmetric part of $F$ is empty or
    non-bipartite.
  \item[(iii)] Now, let $F$ be 2-colourable. Referring to (i) and (ii), we may
    assume that the symmetric part of $F$ is non-empty and bipartite.
    Since $F$ is 2-colourable, by Proposition \ref{ckd} there is a
    circular homomorphism from $F$ to~$\vec{C}_2$. On the other hand,
    since the symmetric part of $F$ is non-empty, $F$ contains a
    digon, and thus, there also is a circular homomorphism from
    $\vec{C}_2$ to~$F$. Hence, in this case the circular $F$-colouring
    and the circular $\vec{C}_2$-colouring problem are equivalent.
    However, deciding 2-colourability of a digraph is NP-hard (cf.~\cite{bokal}).
  \end{enumerate}
\end{proof}

Applying this result to the star dichromatic number, we finally obtain the desired hardness result:
\begin{theorem}
  Let $p>1$ be a rational number. Deciding whether $\vec{\chi}^\ast(D)
  \leq p$ is NP-complete.
\end{theorem}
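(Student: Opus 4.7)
The plan is to reduce the problem to the circular $F$-colouring problem with $F = \vec{C}(k,d)$, where $p = k/d$, and then invoke Theorem \ref{main}. Write $p = k/d$ with $k,d \in \mathbb{N}$ and $k > d$, which is possible since $p > 1$. By Proposition \ref{ckd}, the predicate $\vec{\chi}^{\ast}(D) \le p$ is equivalent to the existence of a circular homomorphism $D \to \vec{C}(k,d)$. Membership in NP is immediate from the NP argument already given in the proof of Theorem \ref{main}: a candidate homomorphism serves as a polynomial-size certificate, and since $F$ is fixed, only constantly many preimage subdigraphs need be tested for acyclicity.

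For NP-hardness it remains to verify that $F = \vec{C}(k,d)$ meets at least one of the three hypotheses of Theorem \ref{main}. First, $F$ contains a directed cycle: by Theorem \ref{StarDichBasic}(iii) we have $\vec{\chi}^{\ast}(\vec{C}(k,d)) = k/d > 1$, so $F$ is not acyclic. I would then split according to whether $p \le 2$ or $p > 2$.

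If $1 < p \le 2$, then combining parts (ii) and (iii) of Theorem \ref{StarDichBasic} yields $\vec{\chi}(\vec{C}(k,d)) = \lceil p \rceil \le 2$, so $F$ is 2-colourable and condition (iii) of Theorem \ref{main} applies. If $p > 2$, then the symmetric part of $\vec{C}(k,d)$ is precisely the circulant graph $C(k,d)$ from Definition \ref{circulant}: a digon between distinct $i,j$ exists iff both $(j-i)\bmod k$ and $(i-j)\bmod k$ are at least $d$, which is equivalent to $\textrm{dist}_k(i,j) \ge d$. By the formula for $\chi(C(k,d))$ recorded after Definition \ref{circulant}, this symmetric part has chromatic number $\lceil k/d \rceil \ge 3$, hence contains an odd cycle, and condition (ii) of Theorem \ref{main} is met.

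I do not anticipate a substantive obstacle: with the machinery of circular homomorphisms and Theorem \ref{main} in place, the argument reduces to the bookkeeping of confirming that the three alternatives in Theorem \ref{main} jointly exhaust all $F = \vec{C}(k,d)$ arising from rational $p > 1$, and the clean dichotomy at $p = 2$ takes care of this.
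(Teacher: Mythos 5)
Your proposal is correct and follows essentially the same route as the paper: translate the problem into the circular $\vec{C}(k,d)$-colouring problem via Proposition \ref{ckd}, then split at $p=2$, using 2-colourability of $\vec{C}(k,d)$ for $p\leq 2$ and the non-bipartiteness of the symmetric part $C(k,d)$ (via $\chi(C(k,d))=\lceil p\rceil\geq 3$) for $p>2$ to invoke Theorem \ref{main}. The extra details you supply (why $\vec{C}(k,d)$ is not acyclic, why the symmetric part equals $C(k,d)$) are correct and only make explicit what the paper leaves implicit.
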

\begin{proof}
  Let $p=\frac{k}{d}$ with $k,d \in \mathbb{N}$. By Proposition
  \ref{ckd}, the decision problem is equivalent to the circular
  $\vec{C}(k,d)$-colouring problem. \n To prove NP-completeness, we
  distinguish between $p \leq 2$ and $p>2$. In the first case,
  $\vec{C}(k,d)$ is not acyclic and 2-colourable and thus the claim
  follows from Theorem \ref{main}. \n In the case $p>2$, the symmetric
  part of $\vec{C}(k,d)$ is given by the circulant graph $C(k,d)$
  defined in Definition \ref{circulant}. $C(k,d)$ has chromatic number
  $\lceil p \rceil \ge 3$ and thus is not bipartite. Consequently,
  another application of Theorem \ref{main} yields the claimed result.
\end{proof}
\section{The Complexity of the Fractional Dichromatic Number} \label{secfrac}
As in the previous section, we now want to deal with decision problems for the fractional dichromatic number as follows.
\begin{problem} \label{probfrac}
Let $p \ge 1$ be a fixed real number. \n
Instance: A (multi-)digraph $D$. \n Decide whether $\vec{\chi}_f(D) \leq p$.
\end{problem}
Again, it is clear that for $p=1$, $\vec{\chi}_f(D) \leq p$ if and only if the digraph $D$ is acyclic, and this can be decided in polynomial time in $|V(D)|$. Conversely, we want to show in the following that for all real numbers $p>1, p \neq 2$, this problem is NP-complete. It is indeed always contained in NP:
\begin{observation}
For any $p \ge 1$, the Problem \ref{probfrac} is in NP.
\end{observation}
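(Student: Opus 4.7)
The plan is to exhibit a polynomial-size certificate for $\vec{\chi}_f(D) \leq p$ based directly on the primal linear program (\ref{primal}). Although the variable set $\mathcal{A}(D)$ may be exponentially large, the LP has only $|V(D)|$ non-trivial inequality constraints, one per vertex. By standard LP theory it therefore admits an optimal basic feasible solution whose support contains at most $|V(D)|$ of the variables $x_A$. In other words, there exist acyclic vertex subsets $A_1,\ldots,A_m \subseteq V(D)$ with $m \leq |V(D)|$ and positive rational weights $x_{A_1},\ldots,x_{A_m}$ that together realise an optimal fractional colouring of $D$.

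The certificate I would use is precisely the list $(A_1,x_{A_1}),\ldots,(A_m,x_{A_m})$. Its encoding size is polynomial in $|V(D)|$: each $A_i$ takes at most $|V(D)|$ bits, and since the constraint matrix of (\ref{primal}) has 0/1 entries, an application of Cramer's rule to the defining submatrix of any basic feasible solution yields a polynomial bound on the bit-length of the weights $x_{A_i}$. Verification in polynomial time then proceeds in three steps: check that $D[A_i]$ is acyclic for each $i$ (for instance via topological sort), check that the covering inequality $\sum_{i:\,v \in A_i} x_{A_i} \geq 1$ holds for every $v \in V(D)$, and finally compare the rational number $\sum_{i=1}^m x_{A_i}$ with $p$. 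The last comparison takes constant time in the input size, since $p$ is a fixed real parameter and not part of the input.

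The only potentially delicate point is the bit-size bound on the weights $x_{A_i}$, and this is a routine consequence of the 0/1 structure of the constraint matrix of (\ref{primal}) together with the Cramer's rule estimate. Note in contrast that the dual formulation (\ref{dual}) does not obviously yield a short certificate, as its feasibility involves one inequality per acyclic vertex set and there may be exponentially many such sets; this is precisely the reason why the argument is based on the primal program.
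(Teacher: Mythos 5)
Your proposal is correct and follows essentially the same route as the paper: both arguments extract an optimal basic feasible solution of the primal program (\ref{primal}) with support of size at most $|V(D)|$, bound the bit-length of the nonzero weights via Cramer's rule applied to the $0/\pm1$ constraint matrix (the paper makes this explicit with Hadamard's inequality), and use the resulting list of acyclic sets with their weights as a polynomially verifiable certificate. The only cosmetic difference is that the paper first adds the constraints $x_A \leq 2$ to obtain a bounded polyhedron before invoking the vertex argument, whereas you appeal directly to the existence of an optimal basic feasible solution.
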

\begin{proof}
  Let $D$ be a digraph given as an instance of the problem. Let again
  $\mathcal{A}(D)$ denote the set of acyclic vertex sets in~$D$. We
  have to prove the existence of a certificate polynomially-sized in
  $|V(D)|$ which is verifiable in polynomial time. For this purpose,
  we repeat some standard arguments from linear programming for
  (\ref{primal}). Clearly, any optimal solution of (\ref{primal})
  satisfies $x \leq \textbf{1}$. Thus, adding the constraints $x_A \leq
  2$ for all $A \in \mathcal{A}(D)$ yields an equivalent bounded
  feasible program. As the optimal solution is attained by a vertex
  $x$ of the corresponding polyhedron there is a subset
  $|\mathcal{A}(D)|$ of the inequality-constraints which are satisfied
  by $x$ with equality, and the corresponding linear system uniquely
  determines~$x$. Since any optimal solution $x$ satisfies $x \leq 1$,
  none of the additional constraints is tight, implying that the
  support of $x$ satisfies $m:=|\textrm{supp}(x)| \le |V(D)|$. Denote by
  $x' \in \mathbb{R}^m$ the subvector of $x$ restricted to the
  support.  As $x'$ is the unique solution of a regular linear system,
  according to Cramer's rule, there are matrices $B_1,\ldots,B_m,B \in
  \{-1,0,1\}^{m \times m}$ such that
  $x_i=\frac{\textrm{det}(B_i)}{\textrm{det}(B)}, i=1,\ldots,m$. According
  to Hadamard's inequality, we have $|\textrm{det}(B_i)|,|\textrm{det}(B)|
  \leq m^{m/2} \leq |V(D)|^{|V(D)|/2}, i=1,\ldots,m$. This finally
  implies that there exist optimal solutions to the linear program
  (\ref{primal}) whose support is of size at most $|V(D)|$ and where
  the non-zero values in the solution are rational numbers, each of
  which can be stored using at most $|V(D)|\log |V(D)|$ bits. Such a
  solution can thus be described using $\mathcal{O}(|V(D)|^2\log
  |V(D)|)$ bits. As we can verify all the constraints and the
  inequality $\sum_{i=1}^{m}{x_i} \leq p$, certifying that
  $\vec{\chi}_f(D) \leq p$, in polynomial time in $m \leq |V(D)|$,
  this finally proves that we can use optimal solutions of this form
  as NP-certificates. This concludes the proof.
\end{proof}

We start our proof of the hardness with the following simple observation derived from the relation of the fractional chromatic and the fractional dichromatic number:
\begin{observation} \label{Gr2Fall}
Let $p \in \mathbb{R}, p > 2$. Then Problem \ref{probfrac} is NP-complete. 
\end{observation}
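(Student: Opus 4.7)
My plan is to reduce Problem~\ref{probfrac} to its analogue for the undirected \emph{fractional chromatic number} via symmetric orientations. Theorem~\ref{relations}(ii) tells us that $\vec{\chi}_f(S(G))=\chi_f(G)$ for every graph $G$, where $S(G)$ is obtained from $G$ by replacing each edge with an antiparallel pair of arcs. Since $G\mapsto S(G)$ is computable in linear time, any polynomial algorithm for Problem~\ref{probfrac} would also decide ``$\chi_f(G)\leq p$?'' on undirected graphs in polynomial time. Together with the NP-membership established in the preceding observation, NP-completeness will follow as soon as one invokes the NP-hardness of deciding $\chi_f(G)\leq p$ for graphs, for every fixed real $p>2$.

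For the graph version, I would appeal to the Hell-Nešetřil theorem (Theorem~\ref{nesetril}): for rational $p=a/b>2$, the Kneser graph $K(a,b)$ has chromatic number $a-2b+2\geq 3$ by the Lovász-Kneser theorem, hence is non-bipartite, and thus its $H$-colouring problem is NP-hard. A homomorphism $G\to K(a,b)$ is precisely a $b$-fold $a$-colouring of $G$, which certifies $\chi_f(G)\leq a/b=p$. For irrational $p>2$ one selects a rational $p'\in(2,p)$ and uses that on $n$-vertex graphs $\chi_f$ is a rational number with denominator bounded in terms of $n$, so that the decision problem for threshold $p$ agrees with the decision problem for threshold $p'$ on a suitable family of instances and hardness carries over.

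The main technical subtlety lies in the converse direction of the Kneser-style reduction: $\chi_f(G)\leq a/b$ is a priori weaker than the existence of a $b$-fold $a$-colouring of $G$, since the fractional chromatic number is defined as an infimum over all $(a',b')$-fold colourings with $a'/b'\leq a/b$. Closing this gap---either through a padding construction that forces $\chi_f$ onto prescribed discrete target values, or by invoking a strengthened NP-hardness theorem for the graph fractional chromatic number from the literature---is the only step that is not entirely formal. Once that point is handled, the digraph conclusion is immediate from the linear-time reduction $G\mapsto S(G)$ and the identity in Theorem~\ref{relations}(ii).
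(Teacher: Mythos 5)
Your reduction is exactly the paper's: map $G$ to its symmetric orientation $S(G)$ and use $\vec{\chi}_f(S(G))=\chi_f(G)$ from Theorem~\ref{relations}(ii), so that hardness follows from the NP-hardness of deciding $\chi_f(G)\leq p$ for real $p>2$. The one step you flag as a genuine subtlety (the mismatch between $\chi_f(G)\leq a/b$ and the existence of a $b$-fold $a$-colouring) concerns only that underlying graph-level result, which the paper does not prove either but imports as a black box from the literature (\cite{kneser}, Theorem~3.9.2) --- precisely the second of the two escape routes you yourself propose.
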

\begin{proof}
  It is well-known (see e.g.\ \cite{kneser}, Theorem 3.9.2) that the
  problem of deciding whether $\chi_f(G) \leq p$ for a given graph $G$
  is NP-hard for any real number~$p>2$. However, this problem
  admits a polynomial reduction to Problem \ref{probfrac} for $p$: For
  any graph $G$, the symmetric orientation $S(G)$ fulfils $\chi_f(G)
  \leq p \Leftrightarrow \vec{\chi}_f(S(G))=\chi_f(G) \leq p$. This
  proves the claim.
\end{proof}

It thus suffices to prove the hardness in the case $p \in (1,2)$. For any given $p$, we will reduce one of the decision problems proved to be hard in Observation \ref{Gr2Fall} to Problem \ref{probfrac} with~$p$. For this purpose, we introduce a certain operation on digraphs reducing its fractional dichromatic number:
\begin{definition}
Let $D$ be a digraph. For every $l \ge 1$, we denote by $D_l$ a digraph called \emph{$l$-split} of $D$ obtained from $D$ by replacing each vertex by a directed path of length $l-1$ as follows: Each vertex $x \in V(D)$ is assigned a directed path $P(x)=x_1 \rightarrow \ldots \rightarrow x_l$ in~$D_l$. The remaining adjacencies within $D_l$ are given as follows: For each arc $e=(u,w)$ in $D$, we have a corresponding arc $(u_l,w_1)$ in~$D_l$. Thus, in a path $P(x)$, $x_1,\ldots,x_{l-1}$ have outdegree $1$ while $x_2,\ldots,x_{l}$ have each exactly one incoming arc. It is furthermore obvious that each directed cycle in $D_l$ contains the whole path $P(x)$ or none of its vertices, for all $x \in V(D)$. This means that there is a bijection between the directed cycles in $D$ and those in $D_l$ by replacing each vertex $x \in V(D)$ contained in a directed cycle by $P(x)$ in $D_l$ and vice versa. 
\end{definition}
The following makes the relation between the fractional and star dichromatic numbers of $D$ and $D_l$ precise.
\begin{proposition}
For each digraph $D$, the following holds:
\begin{itemize}
\item[(i)] \begin{displaymath}\vec{\chi}^\ast(D_l) \leq \frac{l\vec{\chi}^\ast(D)}{(l-1)\vec{\chi}^\ast(D)+1}.\end{displaymath} 
\item[(ii)] \begin{displaymath} \vec{\chi}_f(D_l)=\frac{l\vec{\chi}_f(D)}{(l-1)\vec{\chi}_f(D)+1}.\end{displaymath}
If $\vec{\chi}^\ast(D)=\vec{\chi}_f(D)$, then $\vec{\chi}^\ast(D_l)=\vec{\chi}_f(D_l)=\frac{l\vec{\chi}^\ast(D)}{(l-1)\vec{\chi}^\ast(D)+1}$.
\end{itemize}
\end{proposition}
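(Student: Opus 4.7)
The plan is to prove (i) by an explicit colouring construction, prove (ii) by a primal/dual analysis of the linear program (\ref{primal}), and then deduce the final claim by sandwiching with Theorem \ref{relations}(i). I will rely throughout on the structural fact noted in the definition of $D_l$: a set $S \subseteq V(D_l)$ is acyclic if and only if $X := \{x \in V(D) : P(x) \subseteq S\}$ is acyclic in $D$, because each directed cycle of $D_l$ uses every path $P(x)$ it meets in full.

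For (i), I would take an acyclic $(k,d)$-colouring $c: V(D) \to \mathbb{Z}_k$ with $k/d = \vec{\chi}^\ast(D)$ and lift it to $c': V(D_l) \to \mathbb{Z}_{lk}$ by the rule $c'(x_j) := c(x) + (j-1)k$, setting $d' := (l-1)k + d$. To verify that $c'$ is an acyclic $(lk, d')$-colouring, the combinatorial core is the following count inside any cyclic interval $A'_i \subseteq \mathbb{Z}_{lk}$ of length $d'$: exactly $d$ residues modulo $k$ appear $l$ times and the remaining $k - d$ appear $l - 1$ times, and the ``heavy'' residues form a cyclic interval $Y_i \subseteq \mathbb{Z}_k$ of length $d$. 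This identifies $\{x : P(x) \subseteq c'^{-1}(A'_i)\}$ with $c^{-1}(Y_i)$, which is acyclic by choice of $c$, whence the structural fact gives acyclicity of $c'^{-1}(A'_i)$ in $D_l$.

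For (ii), write $p := \vec{\chi}_f(D)$. For the lower bound I would scale an optimal dual $y$ for $D$ (Proposition \ref{propdual}) to $y'_{x_j} := y_x/((l-1)p + 1)$; feasibility against any acyclic $S \subseteq V(D_l)$ with associated $X := \{x : P(x) \subseteq S\}$ follows from the split
\begin{displaymath}
\sum_{v \in S} y'_v \;=\; \tfrac{1}{(l-1)p+1}\Bigl(l\textstyle\sum_{x \in X} y_x + \sum_{x \notin X} |S \cap P(x)|\, y_x\Bigr) \;\leq\; \tfrac{1 + (l-1)p}{(l-1)p + 1} \;=\; 1,
\end{displaymath}
using $|S \cap P(x)| \leq l - 1$ off $X$, $\sum_{x \in X} y_x \leq 1$ and $\sum_x y_x = p$; the total weight $\sum y'_v = lp/((l-1)p+1)$ is the lower bound. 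For the upper bound I would dually construct a primal solution for $D_l$: for each acyclic $A \subseteq V(D)$ and each $j_0 \in \{1,\ldots,l\}$ define $B_A^{(j_0)} := \bigl(\bigcup_{x \in A} P(x)\bigr) \cup \bigcup_{x \notin A}(P(x) \setminus \{x_{j_0}\})$, which is acyclic in $D_l$ since $\{x : P(x) \subseteq B_A^{(j_0)}\} = A$, and assign it weight $x_A/((l-1)p+1)$ from an optimal primal $(x_A)$ for $D$. A direct coverage count on any $x_j$ — $l$ shifts from each $A \ni x$ and $l-1$ shifts from each $A \not\ni x$ — delivers total coverage $\geq (1 + (l-1)p)/((l-1)p + 1) = 1$ at total cost $lp/((l-1)p+1)$.

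I expect the main obstacle to be finding the right construction in (i): the naive alternative $c'(x_j) := lc(x) + (j-1)$ (which clusters each path $P(x)$ in consecutive colours) already fails on small examples, and only the ``spread'' construction $c'(x_j) := c(x) + (j-1)k$ produces exactly $d$ heavy residues per interval $A'_i$ and makes the count go through. Given (i) and (ii), the last sentence is immediate: when $\vec{\chi}^\ast(D) = \vec{\chi}_f(D) = p$, the bound from (i) together with the equality from (ii) give $\vec{\chi}^\ast(D_l) \leq lp/((l-1)p + 1) = \vec{\chi}_f(D_l)$, while Theorem \ref{relations}(i) supplies the reverse inequality $\vec{\chi}^\ast(D_l) \geq \vec{\chi}_f(D_l)$, collapsing the chain to equality.
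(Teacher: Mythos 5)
Your proof is correct. The overall architecture --- an explicit lifted colouring for (i), LP duality for (ii), and the sandwich via Theorem \ref{relations}(i) for the final claim --- matches the paper's, but two components take genuinely different routes. For (i), the paper starts from a real-valued acyclic $p$-colouring $c:V(D)\to[0,p)$ and sets $c_l(x_i):=(c(x)+(i-1)\frac{p-1}{l})\bmod p$, then argues that a directed cycle of $D_l$ whose image lies in a window of length $p-\frac{p-1}{l}$ forces the underlying cycle of $D$ into a unit window, using that consecutive path vertices cannot jump over the complementary gap of length $\frac{p-1}{l}$. Your integer construction $c'(x_j):=c(x)+(j-1)k$ in $\mathbb{Z}_{lk}$ makes the colours of $P(x)$ exactly the fibre of $c(x)$ under the projection $\mathbb{Z}_{lk}\to\mathbb{Z}_k$, so the residues fully contained in a window of length $(l-1)k+d$ form a length-$d$ interval of $\mathbb{Z}_k$; this reduces the verification to a clean counting identity and dispenses with the gap argument. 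For (ii), your lower bound coincides with the paper's (scale the optimal dual of $D$ by $((l-1)p+1)^{-1}$); the feasibility computation is the same split over $X$ and its complement. For the upper bound, however, the paper stays on the dual side: it takes an optimal dual solution of $D_l$, removes a minimum-weight vertex from each path $P(v)$ with $v\notin A$ to build an acyclic superset of $g(A)$, and converts the solution into a feasible dual for $D$ --- a step that first requires checking $l-(l-1)\vec{\chi}_f(D_l)>0$, which the paper obtains by importing part (i) together with Theorem \ref{relations}(i). Your primal covers $B_A^{(j_0)}$ avoid both the positivity check and the minimum-weight selection, are symmetric with your lower-bound construction, and make part (ii) logically independent of part (i). Both arguments are complete; yours is arguably the more economical of the two.
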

\begin{proof} \noindent
\begin{itemize}
\item[(i)]  In the following, let $p:=\vec{\chi}^\ast(D),
  q:=\vec{\chi}^\ast(D_l)$. We have to prove that \[q \leq
  \frac{lp}{(l-1)p+1}=\frac{p}{p-\frac{p-1}{l}}.\] By scaling, it
  suffices to construct a mapping $c_l:V(D_l) \rightarrow [0,p)$ such
  that there is no directed cycle in $D_l$ whose image is contained in
  some open subinterval of length $p-\frac{p-1}{l}$ of $D_l$. \n For
  this purpose, consider an acyclic $p$-colouring $c:V(D) \rightarrow
  [0,p)$ of $D$. Define $c_l$ according to
  $c_l(x_i):=(c(x)+(i-1)\frac{p-1}{l}) \textrm{ mod }p$ for all $x_i \in
  V(P(x))$ and $x \in V(D)$. \n In order to see that this defines a
  colouring as required, assume for contrary there was an open
  subinterval $(a,b)_p \subseteq [0,p)$ of length $p-\frac{p-1}{l}$
  containing the image $c_l(V(C_l))$ of a directed cycle $C_l$.
  Without loss of generality, let $a=0, b=p-\frac{p-1}{l}$. We claim
  that the directed cycle $C$ in $D$ induced by $C_l$ is contained in
  an open cyclic subinterval of $[0,p)$ of length $1$, which finally
  contradicts the definition of~$c$. Let $\overline{x} \in V(C)$ so
  that $c(\overline{x})=\max_{x \in V(C)} c(x)$. It now suffices to
  show that $c(\overline{x})<1$. We have that
  \[c_l(\overline{x}_i)=\left(c(\overline{x})+(i-1)\frac{p-1}{l}\right) \textrm{ mod
  }p \in \left(0,p-\frac{p-1}{l}\right) (i=1,\ldots,l).\] Because of
  $(c_l(\overline{x}_{i+1})-c_l(\overline{x}_i)) \textrm{ mod
  }p=\frac{p-1}{l}$ for $i=1,\ldots,l-1$, the $c_l(\overline{x}_i)$
  can not cross the gap $[p-\frac{p-1}{l},0]_p$ of length
  $\frac{p-1}{l}$ and thus we have
 \begin{displaymath}c_l(\overline{x}_{l})=c(\overline{x})+(l-1)\frac{p-1}{l}<p-\frac{p-1}{l}\end{displaymath} implying $c(\overline{x})<1$. Finally, this proves the first
  inequality. \n
\item[(ii)] For the second statement we again use the alternative representation of $\vec{\chi}_f(D)$ as the maximal value of the dual program (\ref{dual}) in Proposition~\ref{propdual}. \n
Throughout the rest of the proof, the following relation between acyclic sets of $D$ and its $l$-split will be crucial: Define a mapping $f:\mathcal{A}(D_l) \rightarrow \mathcal{A}(D)$ such that for all $B \subseteq V(D_l)$, $f(B):=\{x \in V(D)|V(P(x)) \subseteq B\}$. Furthermore, define $g:\mathcal{A}(D) \rightarrow \mathcal{A}(D_l)$ by $g(A):=\bigcup_{x \in A}{V(P(x))}$. These mappings are well-defined due to the bijection between directed cycles in $D$ resp.\ $D_l$ described above. We clearly have $f \circ g=\textrm{id}_{\mathcal{A}(D)}$ and thus, $g$ is injective while $f$ is surjective. \n
We start by showing that $\vec{\chi}_f(D_l) \geq \frac{l\vec{\chi}_f(D)}{(l-1)\vec{\chi}_f(D)+1}$. For this purpose, let $y_v, v \in V(D)$ be an optimal instance for the dual program (\ref{dual}) for $D$, i.e., $\sum_{v \in V(D)}{y_v}=\vec{\chi}_f(D)$. We define an instance of the dual problem for $D_l$ as follows: For every $w \in V(D_l), v \in V(D)$ with $w \in V(P(v))$, let $y_w':=\frac{y_v}{(l-1)\vec{\chi}_f(D)+1} \ge 0$. Obviously, \begin{displaymath}\sum_{w \in V(D_l)}{y_w'}=\frac{\sum_{y \in V(D)}{ly_v}}{(l-1)\vec{\chi}_f(D)+1}=\frac{l\vec{\chi}_f(D)}{(l-1)\vec{\chi}_f(D)+1}.\end{displaymath} Furthermore, for each $B \in \mathcal{A}(D_l)$, we have

\begin{displaymath}\sum_{w \in B}{y_w'} \leq \sum_{v \in f(B)}{l\frac{y_v}{(l-1)\vec{\chi}_f(D)+1}}+\sum_{v \in V(D) \backslash f(B)}{(l-1)\frac{y_v}{(l-1)\vec{\chi}_f(D)+1}}\end{displaymath}
\begin{displaymath}=\frac{1}{(l-1)\vec{\chi}_f(D)+1}\left((l-1)\sum_{v \in V(D)}{y_v}+\underbrace{\sum_{v \in f(B)}{y_v}}_{\leq 1}\right) \leq 1.\end{displaymath}
Thus, the $y_w'$ are admissible for the program, which proves the first inequality. \n
For the reverse inequality, we want to show \begin{displaymath}\vec{\chi}_f(D_l) \leq \frac{l\vec{\chi}_f(D)}{(l-1)\vec{\chi}_f(D)+1} \qquad \textrm{ or equivalently }\end{displaymath} \begin{displaymath} \vec{\chi}_f(D) \ge \frac{\vec{\chi}_f(D_l)}{l-(l-1)\vec{\chi}_f(D_l)}.\end{displaymath} Notice that always $\vec{\chi}_f(D_l) \leq \vec{\chi}^\ast(D_l) \leq \frac{l\vec{\chi}^\ast(D)}{(l-1)\vec{\chi}^\ast(D)+1} < \frac{l}{l-1}$ and thus $l-(l-1)\vec{\chi}_f(D_l) > 0$. \n
Assume now that $y_w', w \in V(D_l)$ is an optimal solution of the dual program (\ref{dual}) for $D_l$, which means $\sum_{w \in V(D_l)}{y_w'}=\vec{\chi}_f(D_l).$ We define an instance of the dual program for $D$ according to \begin{displaymath}y_v:=\frac{\sum_{w \in V(P(v))}{y_w'}}{l-(l-1)\vec{\chi}_f(D_l)} \ge 0\end{displaymath} for each $v \in V(D)$. First of all, with this definition, we have \begin{displaymath}\sum_{v \in V(D)}{y_v}=\frac{\sum_{w \in V(D_l)}{y_w'}}{l-(l-1)\vec{\chi}_f(D_l)}=\frac{\vec{\chi}_f(D_l)}{l-(l-1)\vec{\chi}_f(D_l)}.\end{displaymath} For the above inequality, it thus suffices to verify that the $y_v$ define a legal instance for the dual program: Let $A \in \mathcal{A}(D)$ be arbitrary. Then
\begin{displaymath}\sum_{v \in A}{y_v}=\frac{\sum_{v \in A}{\sum_{w \in V(P(v))}{y_w'}}}{l-(l-1)\vec{\chi}_f(D_l)}=\frac{\sum_{w \in g(A)}{y_w'}}{l-(l-1)\vec{\chi}_f(D_l)}.\end{displaymath}
For each $v \in V(D) \backslash A$, we choose exactly one vertex $w_v \in V(P(v))$ with minimal value within $P(v)$ and consider the acyclic vertex subset $X:=V(D_l) \backslash \bigcup_{v \in V(D)\backslash A}{\{w_v\}}$ which contains $g(A)$. According to our choice of the $w_v$, we know that
\begin{displaymath}1 \ge \sum_{w \in X}{y_w'}=\sum_{w \in g(A)}{y_w'}+\sum_{v \in V(D) \backslash A}{\left(\sum_{w \in V(P(v)), w \neq w_v}{y_w'}\right)}\end{displaymath} \begin{displaymath}\ge \sum_{w \in g(A)}{y_w'}+\sum_{v \in V(D) \backslash A}{\frac{l-1}{l}\sum_{w \in V(P(v))}{y_w'}}=\left(\frac{l-1}{l}+\frac{1}{l}\right)\sum_{w \in g(A)}{y_w'}+\frac{l-1}{l}\sum_{w \in V(D_l) \backslash g(A)}{y_w'}\end{displaymath} \begin{displaymath}=\frac{l-1}{l}\vec{\chi}_f(D_l)+\frac{1}{l}\sum_{w \in g(A)}{y_w'}.\end{displaymath}
Multiplying the inequality with $l$ and subtracting $(l-1)\vec{\chi}_f(D_l)$ now yields that indeed \begin{displaymath}\sum_{v \in A}{y_v}=\frac{\sum_{w \in g(A)}{y_w'}}{l-(l-1)\vec{\chi}_f(D_l)} \le 1\end{displaymath} and thus $\vec{\chi}_f(D) \ge \frac{\vec{\chi}_f(D_l)}{l-(l-1)\vec{\chi}_f(D_l)}$ as claimed. \n
Finally, this proves $\vec{\chi}_f(D_l)=\frac{l\vec{\chi}_f(D)}{(l-1)\vec{\chi}_f(D)+1}$.
\end{itemize}
\noindent
If now $\vec{\chi}^\ast(D)=\vec{\chi}_f(D)$, we may conclude:
\begin{displaymath}\frac{l\vec{\chi}^\ast(D)}{(l-1)\vec{\chi}^\ast(D)+1}=\frac{l\vec{\chi}_f(D)}{(l-1)\vec{\chi}_f(D)+1}=\vec{\chi}_f(D_l) \leq \vec{\chi}^\ast(D_l) \leq \frac{l\vec{\chi}^\ast(D)}{(l-1)\vec{\chi}^\ast(D)+1}.\end{displaymath}
Thus all inequalities hold with equality, and this yields the last statement claimed in the theorem.
\end{proof}

The following is now an immediate consequence of the above:
\begin{theorem} \label{fraccomp}
Problem \ref{probfrac} is NP-complete for every real number $p >1, p \neq 2$.
\end{theorem}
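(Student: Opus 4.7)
The plan is to reduce from Problem \ref{probfrac} with a threshold $p' > 2$, which is NP-hard by Observation \ref{Gr2Fall}, to the same problem with the prescribed threshold $p$, using the $l$-split construction of the preceding proposition. Since membership in NP is already established and the case $p > 2$ is settled by Observation \ref{Gr2Fall}, only the range $p \in (1, 2)$ requires attention.

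Given $p \in (1, 2)$, I would first select an integer parameter $l \ge 2$ so that the map $f(x) := \frac{lx}{(l-1)x + 1}$ sends some $p' > 2$ to exactly $p$. Inverting this relation yields $p' = \frac{p}{l - (l-1)p}$; the requirements that $p'$ be well-defined and strictly greater than $2$ translate into $l < \frac{p}{p-1}$ and $l > \frac{p}{2(p-1)}$, respectively. The interval $\bigl(\frac{p}{2(p-1)}, \frac{p}{p-1}\bigr)$ has length $\frac{p}{2(p-1)}$, which exceeds $1$ for all $p \in (1,2)$, and whose left endpoint also exceeds $1$; hence it always contains some integer $l \ge 2$.

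With $l$ (and hence $p'$) fixed, the reduction proceeds as follows: given a graph $G$ as an instance of the NP-hard problem of deciding $\chi_f(G) \le p'$, output the digraph $(S(G))_l$, that is, the $l$-split of the symmetric orientation of~$G$. Since $l$ depends only on $p$, this construction is polynomial-time in $|V(G)|$. Combining Theorem \ref{relations}(ii) with part (ii) of the preceding proposition yields $\vec{\chi}_f((S(G))_l) = f(\chi_f(G))$, and since $f$ is strictly increasing on $[1, \infty)$, the chain
\[ \chi_f(G) \le p' \iff f(\chi_f(G)) \le f(p') = p \iff \vec{\chi}_f((S(G))_l) \le p \]
establishes the correctness of the reduction.

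The main non-routine step in this plan is the small numerical verification that an integer $l$ with the required properties exists for every $p \in (1, 2)$; the rest is a direct appeal to the preceding proposition, Theorem \ref{relations}, and Observation \ref{Gr2Fall}.
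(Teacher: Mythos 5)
Your proposal is correct and follows essentially the same route as the paper: both reduce a $p'>2$ instance to the $p\in(1,2)$ instance via the $l$-split with $p'=\frac{p}{l-(l-1)p}$ and the monotonicity of $x\mapsto\frac{lx}{(l-1)x+1}$; you merely inline Observation \ref{Gr2Fall} by starting from $\chi_f(G)\le p'$ and applying the split to $S(G)$, whereas the paper reduces from Problem \ref{probfrac} at $p'$ directly. Your explicit verification that the interval $\bigl(\frac{p}{2(p-1)},\frac{p}{p-1}\bigr)$ contains an integer $l\ge 2$ (equivalent to the paper's condition $p\in(\frac{2l}{2l-1},\frac{l}{l-1})$) is a welcome detail the paper leaves implicit.
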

\begin{proof}
The case $p>2$ was proved in Observation \ref{Gr2Fall}, so let now $p \in (1,2)$ be arbitrary. Then there is an $l \in \mathbb{N}, l \ge 2$ only dependent on $p$ such that $p \in (\frac{2l}{2l-1},\frac{l}{l-1})$. Choose such an $l$ and define $p':=\frac{p}{l-(l-1)p}$. Then $p'>2$ and thus, Problem \ref{probfrac} is NP-hard for~$p'$. However, since the function $x \rightarrow \frac{lx}{(l-1)x+1}$ is strongly increasing for positive values of $x$, we have for any digraph $D$ that $\vec{\chi}_f(D) \leq p'$ if and only if $\vec{\chi}_f(D_l)=\frac{l\vec{\chi}_f(D)}{(l-1)\vec{\chi}_f(D)+1} \leq \frac{lp'}{(l-1)p'+1}=p$. This thus provides a polynomial reduction of Problem \ref{probfrac} with $p'$ to the one with $p$, proving the NP-hardness (and thus -completeness) of the latter.
\end{proof}
\section{The Complexity of the Circular Vertex Arboricity} \label{ComplVertA}
In this section, we treat the analogue of the decision Problems \ref{probstar} and \ref{probfrac} for the circular vertex arboricity $\textrm{va}_c(G)$ of graphs.
\begin{problem} \label{probcircvert}
Let $p \ge 1$ be a fixed rational number. \n
Instance: A (multi-)graph $G$. \n Decide whether $\textrm{va}_c(G) \leq p$.
\end{problem}
It is easy to see that in the case that $p=\frac{k}{d} \ge 1$ is a rational number, any $(k,d)$-tree-colouring of a graph can be used as polynomially verifiable certificate for $\textrm{va}_c(G) \leq p$, and so the above decision problem for $p$ is contained in NP. \n
In order to prove complexity results, as in the case of the star dichromatic number, we could introduce a notion of circular homomorphisms between graphs analogous to Definition \ref{circhom} and consider corresponding homomorphism-colouring problems. However, unlike in the case of digraphs, no easy interpretation of the $(k,d)$-tree-colouring problem of a graph as such a homomorphism problem seems possible in general, which goes along with the fact that no simple canonical constructions of graphs with circular vertex arboricity $\frac{k}{d}$ for any pair $(k,d)$ similar to the circulant (di)graphs $\vec{C}(k,d),C(k,d)$ are known so far. \n
It is again easily observed that $va_c(G) = 1$ for any graph $G$ if and only if it is a forest, so the above decision problem is polynomially solvable for~$p=1$. \n
In the following we prove that similar to the cases of the circular and star dichromatic numbers, Problem \ref{probcircvert} is NP-complete for all rational numbers~$p>1$. We prepare the proof with the following observation.
\begin{lemma}
Let $(k,d) \in \mathbb{N}^2, k > d$. Let $I(k,d)$ denote the minimal size of a subset of $\mathbb{Z}_k$ which is not contained in a cyclic subinterval of size $d$. Then \begin{displaymath}I(k,d):=\min\left\{|A| \big\vert A \subseteq \mathbb{Z}_k, \forall i \in \mathbb{Z}_k : A \not\subseteq \{i,i+1,...,i+d-1\}\right\}=\left\lceil \frac{k}{k-d} \right\rceil. \end{displaymath}
\end{lemma}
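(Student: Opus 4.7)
The plan is to reformulate the condition on $A$ in terms of the complementary cyclic intervals and then use a simple counting argument on the cyclic gaps induced by $A$ on $\mathbb{Z}_k$.

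First I would observe that $A \subseteq \mathbb{Z}_k$ is contained in some cyclic interval $\{i,i+1,\ldots,i+d-1\}$ of length $d$ if and only if $A$ is disjoint from some cyclic interval of length $k-d$ (namely the complement). Consequently, the condition that $A$ is \emph{not} contained in any length-$d$ interval is equivalent to saying that $A$ intersects every cyclic interval of length $k-d$ in $\mathbb{Z}_k$. This rephrasing is the key reduction, because it lets me control $|A|$ via the ``gaps'' between its consecutive elements in cyclic order.

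For the lower bound, write $A=\{a_0,a_1,\ldots,a_{m-1}\}$ in cyclic order with $m:=|A|$, and let $g_j$ be the number of elements of $\mathbb{Z}_k\setminus A$ lying strictly between $a_j$ and $a_{j+1}$ (indices mod $m$). Each maximal run of non-$A$ elements is itself a cyclic interval disjoint from $A$, so the hitting condition forces $g_j\le k-d-1$ for every~$j$. Summing and using $\sum_j g_j=k-m$, we obtain $k-m\le m(k-d-1)$, i.e.\ $m(k-d)\ge k$, and thus $m\ge\lceil k/(k-d)\rceil$.

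For the upper bound I would exhibit an explicit set of this size. Set $m:=\lceil k/(k-d)\rceil$ and place the elements of $A$ as evenly as possible on $\mathbb{Z}_k$, for instance $A:=\{\lfloor jk/m\rfloor \mid j=0,1,\ldots,m-1\}$. The consecutive gaps then differ from $k/m$ by less than $1$, and since $k/m\le k-d$ by the definition of $m$, each gap satisfies $g_j\le k-d-1$; by the equivalence above this means $A$ meets every cyclic interval of length $k-d$, so $A$ is not contained in any cyclic interval of length~$d$. Combining both bounds yields $I(k,d)=\lceil k/(k-d)\rceil$. The only conceptual step is the initial duality between ``not contained in a length-$d$ interval'' and ``meeting every length-$(k-d)$ interval''; once that is in hand, the remainder is a routine averaging/pigeonhole argument on gap lengths.
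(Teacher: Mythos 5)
Your proposal is correct and follows essentially the same route as the paper: both pass to the complementary cyclic intervals of length $k-d$, bound the size of $A$ from below by noting that consecutive elements of $A$ in cyclic order can be at distance at most $k-d$ (your gap count $g_j\le k-d-1$ is the same inequality), and then exhibit an explicit evenly spaced set of size $\left\lceil \frac{k}{k-d}\right\rceil$ for the upper bound. The only cosmetic difference is your choice of $\lfloor jk/m\rfloor$ versus the paper's $(k-d)i$ for the extremal example; both work.
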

\begin{proof}
The complements of the cyclic subintervals of $\mathbb{Z}_k$ of size $d$ are the cyclic subintervals of size $k-d$. Thus, if $A \subseteq \mathbb{Z}_k$ is not contained in a cyclic subinterval of length $d$, any two consecutive points in $A$ according to the cyclic ordering of $\mathbb{Z}_k$ must have cyclic distance at most $k-d$. Consequently, \mbox{$(k-d)|A| \ge k$ implying $|A| \geq \left\lceil \frac{k}{k-d} \right\rceil$}, and thus $I(k,d) \geq \left\lceil \frac{k}{k-d} \right\rceil$. On the other hand, we may define $\left\lceil \frac{k}{k-d} \right\rceil$ points in $\mathbb{Z}_k$ according to $a_i:=((k-d)i)$, for each $i \in \{0,\ldots, \left\lceil \frac{k}{k-d} \right\rceil-1\} \subseteq \mathbb{Z}_k$, and it is easily seen that $\left\{a_0,\ldots,a_{\left\lceil \frac{k}{k-d} \right\rceil-1}\right\}$ defines a set as required, proving $I(k,d) \leq \left\lceil \frac{k}{k-d} \right\rceil$.
\end{proof}

For any pair $(k,d) \in \mathbb{N}^2, k > d $ we now define a simple auxiliary graph $H(k,d)$ which has vertex set $\mathbb{Z}_k$ and in which a pair $i \neq j \in \mathbb{Z}_k$ of vertices is adjacent if and only if there is a subset $A \subseteq \mathbb{Z}_k$ not contained in any cyclic subinterval of size $d$ such that $\{i,j\} \subseteq A$ and $|A|=I(k,d)$. It is easy to see that whenever $\frac{k}{d} \ge 2$, $H(k,d)$ is just the circulant graph $C(k,d)$ defined in Definition \ref{circulant}. More generally, it follows from the definition that adjacency in $H(k,d)$ only depends on the circular distance of the respective vertices. Hence, $H(k,d)$ is always a circulant graph. For instance, $H(5,3)$ is the complete graph $K_5$, $H(6,4)$ is the disjoint union of two triangles and $H(8,5)$ admits an edge between vertices $i,j \in \mathbb{Z}_{8}$ if and only if $|i-j|_{8} \in \{2,3\}$.  \n
We are now prepared to prove the following NP-hardness result.
\begin{theorem}\label{mainvert}
For any rational number $p>1$, Problem \ref{probcircvert} is NP-complete.
\end{theorem}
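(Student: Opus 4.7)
The problem is trivially in NP, since any $(k,d)$-tree-colouring with $k/d\le p$ (one may assume $k\le|V(G)|$) serves as a polynomially verifiable certificate. For NP-hardness, the plan is to split into three ranges of $p$, since the auxiliary graph $H(k,d)$ is bipartite precisely at $p=2$ and behaves differently above and below this boundary.

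The boundary case $p=2$ is immediate: property (ii) of the basic theorem on $\textrm{va}_c$ gives $\textrm{va}_c(G)\le 2\iff\textrm{va}(G)\le 2$, and the latter is classically NP-complete. For $p>2$, write $p=k/d$ with $k>2d$; then $I(k,d)=2$, the auxiliary graph $H(k,d)$ equals $C(k,d)$ and has chromatic number $\lceil p\rceil\ge 3$, so by Theorem \ref{nesetril} the $C(k,d)$-colouring problem is NP-complete. The plan is to reduce from this via the gadget that doubles every edge of the input graph: the resulting 2-cycles force adjacent vertices to be circularly $d$-apart in any $(k,d)$-tree-colouring of the doubled graph, i.e.\ to give a proper $(k,d)$-colouring of the original; conversely any proper $(k,d)$-colouring of the original leaves every interval-preimage in the doubled graph edgeless and hence a forest.

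The delicate case is $1<p<2$. Here $I(k,d)\ge 3$ and every minimal not-in-interval set of size $I(k,d)$ induces a clique of that size in $H(k,d)$, so $H(k,d)$ contains an odd cycle and is non-bipartite; by Theorem \ref{nesetril} the $H(k,d)$-colouring problem is NP-complete. The plan is to reduce this to Problem \ref{probcircvert} via a cycle gadget: for each edge $uv$ of the input graph $G$ one adjoins $I(k,d)-2$ fresh vertices so that, together with the retained edge $uv$, they form a cycle $C(uv)$ of length $I(k,d)$ through $u$ and $v$. Call the resulting graph $G'$. The backward direction of the equivalence is then straightforward: in any $(k,d)$-tree-colouring $c$ of $G'$, the image $c(V(C(uv)))$ has at most $I(k,d)$ elements and must avoid every cyclic $d$-interval of $\mathbb{Z}_k$, so by the lemma it is forced to be a minimal not-in-interval set of size exactly $I(k,d)$ containing $c(u)$ and $c(v)$; by the very definition of $H(k,d)$, the restriction $c|_{V(G)}$ is then an $H(k,d)$-colouring of $G$.

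The main obstacle will be the forward direction, i.e.\ extending a given $H(k,d)$-colouring $\phi$ of $G$ to a $(k,d)$-tree-colouring of $G'$: pairwise adjacencies in $H(k,d)$ only guarantee the tree property locally inside each gadget cycle, whereas cycles of $G$ (which survive in $G'$) as well as cycles weaving through several gadgets may still be mapped into a single cyclic $d$-interval. The intended remedy is to coordinate, across the edges of $G$, the choices of minimal sets $A_e\ni\phi(u),\phi(v)$ and of bijections from the internal gadget vertices onto $A_e\setminus\{\phi(u),\phi(v)\}$ so that every cycle weaving through several gadgets accumulates internal colours escaping every $d$-interval, if necessary reducing from a suitably restricted version of $H(k,d)$-colouring (e.g.\ on inputs of girth larger than $I(k,d)$) in order to eliminate short cycles in $G$ altogether.
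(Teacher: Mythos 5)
Your treatment of the cases $p=2$ and $p>2$ is sound and essentially matches the paper (for $p>2$ the paper's gadget specialises to multiplying each edge, since the replacement paths then have length $I(k,d)-1=1$). The genuine gap is in the main case $1<p<2$, and it is exactly the one you flag but do not close: your gadget \emph{retains} the original edge $uv$, so every cycle of $G$ survives intact in $G'$, and pairwise adjacency in $H(k,d)$ does not force the colour set of such a cycle to escape every cyclic $d$-interval. Concretely, in $H(8,5)$ the vertices $0,2,4$ are pairwise adjacent (each pair lies in a minimal set of size $I(8,5)=3$ avoiding all $5$-intervals), yet $\{0,2,4\}\subseteq\{0,1,2,3,4\}$; a triangle of $G$ coloured this way lies inside a single $d$-interval and kills the forward direction. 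Your proposed remedies do not repair this: restricting to high-girth inputs only lengthens the surviving cycles of $G$ without preventing them from being mapped into a clique of $H(k,d)$ that sits inside one $d$-interval, and the ``coordination'' of the sets $A_e$ across gadgets is precisely the hard combinatorial problem the construction is supposed to avoid.

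The paper's fix is to \emph{delete} the original edge and replace it by $2^k$ internally disjoint parallel paths of length $I(k,d)-1$, each of whose internal vertices receive the colours $A(e)\setminus\{\phi(u),\phi(v)\}$. Then every cycle of the constructed graph necessarily traverses at least one full replacement path, so its colour set contains some $A(e)$ and automatically escapes every $d$-interval --- no coordination between gadgets and no girth hypothesis is needed; cycles of $G$ and cycles weaving through several gadgets are handled for free. The price is that a single path is no longer a cycle, so the backward direction can no longer read off a forced minimal set from one gadget; the paper recovers it by pigeonhole over the $2^k$ parallel paths, finding two with identical colour sets whose union is a cycle with at most $I(k,d)$ colours, which must then be a minimal set containing $c(u)\neq c(v)$. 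Your single-cycle gadget makes the backward direction cleaner but breaks the forward direction; as written the reduction for $1<p<2$ is not established.
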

\begin{proof}
The NP-membership of the problem was verified above. \n
So let now $1<p=\frac{k}{d}$ be arbitrary but fixed. We distinguish between the cases $p=2$ and $p \neq 2$. \n
Assume first that $p \neq 2$. We prove the claimed NP-hardness by describing a polynomial reduction of the $H(k,d)$-colouring problem (in terms of graph homomorphisms) to Problem \ref{probcircvert} with $p=\frac{k}{d}$. To do so, given any graph $G$ as an instance of the $H(k,d)$-colouring problem, we construct (in polynomial time) a graph $G_{k,d}$ of size polynomial in $|V(G)|$ and prove that $G$ maps to $H(k,d)$ if and only if $\textrm{va}_c(G_{k,d}) \leq \frac{k}{d}$, which is equivalent to $G_{k,d}$ admitting a $(k,d)$-tree-colouring. \n
The graph $G_{k,d}$ is obtained from $G$ by replacing any edge $e \in E(G)$ by a bunch of $2^k$ parallel paths of length $I(k,d)-1$ each connecting the end vertices of $e$. The vertex sets of different replacement-paths are disjoint except for common end vertices. \n
To prove the first direction of the claimed equivalence, assume there is a graph homomorphism $\phi:V(G) \rightarrow \mathbb{Z}_k=V(H(k,d))$. This means that for any edge $e=xy \in E(G)$, $\phi(x) \neq \phi(y)$ are contained in a subset $A(e) \subseteq \mathbb{Z}_k$ of size $I(k,d)$ which is not contained in a cyclic subinterval of $\mathbb{Z}_k$ of size $d$. We now define a colouring $c:V(G_{k,d}) \rightarrow \mathbb{Z}_k$ of $G_{k,d}$ as follows: Any vertex $v \in V(G_{k,d})$ originally contained in $G$ gets colour $c(v):=\phi(v)$. For any replacement-path $P$ of an edge $e \in E(G)$, we assign all the $I(k,d)-2$ elements of $A(e) \setminus \{\phi(x),\phi(y)\}$ to the $I(k,d)-2$ internal vertices of $P$ (in arbitrary order). We claim that this defines a $(k,d)$-tree-colouring of $G_{k,d}$: For any cycle $C$ in $G_{k,d}$, $V(C)$ contains the vertex set of a whole replacement-path of an edge $e \in E(G)$, and thus $c(V(C)) \supseteq A(e)$. As $A(e)$ is not contained in any cyclic subinterval of $\mathbb{Z}_k$ of size $d$, the same is true for $c(V(C))$. This proves the validity of $c$ as a $(k,d)$-tree-colouring and we conclude $\textrm{va}_c(G_{k,d}) \leq \frac{k}{d}$. \n
To prove the reverse implication, assume $\textrm{va}_c(G_{k,d}) \leq p=\frac{k}{d}$, i.e., $G_{k,d}$ admits a $(k,d)$-tree-colouring $c$. We define $\phi:V(G) \rightarrow \mathbb{Z}_k=V(H(k,d))$ by restriction of $c$ to the vertices originally contained in $G$. We claim that this defines a graph homomorphism. To prove this, let $e=xy \in E(G)$ be an edge. Any of the $2^k$ replacement-paths of $e$ in $G_{k,d}$ receives a non-empty subset of $\mathbb{Z}_k$ of colours according to $c$. Applying the pigeon-hole principle we find a pair $P_1(e) \neq P_2(e)$ of replacement-paths of $e$ such that $c(V(P_1(e)))=c(V(P_2(e)))$. As the union of $P_1(e)$ and $P_2(e)$ forms a cycle in $G_{k,d}$, according to the definition of a $(k,d)$-tree-colouring, it follows that $c(V(P_1(e))) \cup c(V(P_2(e)))=c(V(P_1(e)))$ is not contained in a cyclic subinterval of $\mathbb{Z}_k$ of size $d$ and is of size at most $|c(V(P_1(e)))|\leq|V(P_1(e))|=I(k,d)$. According to the definition of $I(k,d)$, this implies $|c(V(P_1(e)))|=I(k,d)$. Consequently, all the $I(k,d)$ colours assigned to the vertices of $P_1(e)$ are pairwise distinct, and thus, $\phi(x),\phi(y) \in c(V(P_1(e)))$ are distinct. According to the definition of $H(k,d)$, this implies that $\phi(x)\phi(y)$ forms an edge in $H(k,d)$, i.e., $\phi$ indeed is a graph homomorphism mapping $G$ to $H(k,d)$. \n
We finally conclude the correctness of the reduction. For the NP-hardness, it thus remains to verify that the $H(k,d)$-colouring problem is NP-hard. According to Theorem \ref{nesetril}, its suffices to prove that $H(k,d)$ is non-bipartite. As $p=\frac{k}{d} \neq 2$, we either have $\frac{k}{d}<2$, which implies $I(k,d)=\left\lceil \frac{k}{k-d} \right\rceil \ge 3$, and thus, $H(k,d)$ contains a clique of size at least $3$ and is thus not bipartite. Otherwise, we have $\frac{k}{d}>2$ and thus, $\chi(H(k,d))=\chi(C(k,d))=\lceil p \rceil \ge 3$. This finally yields the claimed NP-hardness (and thus -completeness) in the case $p \neq 2$. \N
In the remaining case of $p=2$, deciding Problem \ref{probcircvert} is the same as deciding whether a given graph $G$ fulfills $\textrm{va}(G) \leq 2$. However, it is not hard to see that a planar cubic 3-connected graph $G$ admits a Hamiltonian cycle if and only if its planar dual graph $G^\ast$ admits vertex arboricity at most $2$ (cf. \cite{hakimischmeichel}, \cite{Hoch}). Consequently, the NP-hardness of the decision problem \ref{probcircvert} in this case follows from the NP-hardness of the Hamiltonicity problem restricted to planar cubic 3-connected graphs. 
\end{proof}
\section{Conclusive Remarks}
The complexity results achieved in this paper together with the results by \cite{bojannp} clarify our view on fractional colouring parameters related to acyclic vertex sets in digraphs and graphs in terms of computational complexity. In contrast to the initial guess of the authors, deciding $p$-colourability for these notions remains NP-complete even for values of $p$ close to $1$. Looking at related notions such as the \emph{fractional and circular arboricity} of graphs, which can be computed in polynomial time using the Matroid Partitioning Algorithm (cf. \cite{kneser}, Chapter 5 and \cite{vanden}), those results show that circular notions of vertex- and edge-arboricity behave very differently with respect to complexity.

Theorem~\ref{fraccomp} furthermore does not treat the case $p=2$. Thus, we cannot rule out the possibility that there is some clever way to algorithmically decide whether a given digraph has fractional dichromatic number at most $2$. Still, the authors strongly believe that using other techniques, it should be possible to prove NP-completeness also in this case.

A natural question left open in this paper concerns restrictions of the treated decision problems to specialised inputs. An interesting special case consists of (simple) planar (di)graphs. It is clear that deciding the problems \ref{probstar}, \ref{probfrac} and \ref{probcircvert} will now be trivially polynomial-time solvable for large values of $p$, as for instance, simple planar digraphs are conjectured to be 2-dichromatic (\cite{NeuLara}), while an upper bound of $2.5$ for each of the three notions studied in this paper is known when restricting to simple planar (di)graphs (\cite{hochsteiner}, \cite{fracvert}).

It appears to be hard to use the reductions provided in this paper to achieve hardness results for planar inputs. This is mostly due to the fact that the complexity of $H$-colouring planar graphs is very poorly understood. While the $K_4$-colouring problem is trivially in $P$ (output true), only for few graphs $H$ such as odd cycles (cf. \cite{hcolouringplanar}) hardness results are known, while for many non-trivial graphs such as the \emph{Clebsch graph}, the $H$-colouring problem becomes solvable in polynomial time. Moreover, the $l$-split-operation used in Section \ref{secfrac} does not preserve planarity.

Still, we may deduce the following special cases:
\begin{theorem}
\noindent
\begin{itemize}
\item Deciding whether a given simple planar digraph $D$ fulfills $\vec{\chi}^\ast(D) \leq \frac{3}{2}$ is NP-complete.
\item Deciding whether a a given simple planar graph $G$ fulfills $\textrm{va}_c(G) \leq \frac{3}{2}$ is NP-complete.
\end{itemize}
\end{theorem}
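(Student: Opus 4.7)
The plan is to reduce from planar $3$-colouring, a classical NP-hard problem, to both decision problems while ensuring the reductions preserve planarity and simplicity; NP-membership is routine in both cases, since acyclic $(3,2)$-colourings and $(3,2)$-tree-colourings serve as polynomial-sized certificates.

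For the first bullet, we invoke Proposition \ref{ckd} to identify $\vec{\chi}^\ast(D) \leq 3/2$ with the circular $\vec{C}(3,2)$-colouring problem, and note that $\vec{C}(3,2)$ is just the directed triangle $\vec{C}_3$. This places us in the setting of Theorem \ref{main}(i) applied to $F = \vec{C}_3$, where the auxiliary graph $H_F$ equals $K_3$ and the source problem is therefore $3$-colouring. Given a planar graph $G$, the construction of Theorem \ref{main}(i) picks any acyclic orientation $\vec{G}$ (still planar) and, for each arc $x \to y$, attaches a new vertex $v_{xy}$ with arcs $y \to v_{xy}$ and $v_{xy} \to x$, completing each edge into a directed triangle. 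The key observation is that each $v_{xy}$ can be placed inside one of the at most two faces of the planar embedding adjacent to $xy$, so planarity is preserved; no parallel arcs or loops are created, so $D_G$ remains simple, and the correctness argument of Theorem \ref{main}(i) then applies verbatim.

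For the second bullet, the construction of Theorem \ref{mainvert} with $p = 3/2$ would use $2^3 = 8$ parallel paths of length $2$ per edge, which in general destroys planarity. We would instead sharpen the pigeonhole argument so that only $4$ midpoints per edge suffice, and modify the reduction by deleting each original edge of $G$ rather than retaining it. Concretely, starting from a planar $G$, we form $G'$ by replacing each edge $e = xy$ with four paths $x z_{e,i} y$ for $i = 1, \ldots, 4$ and no direct edge between $x$ and $y$. These four paths can be drawn inside a small topological disk surrounding the original curve of $e$, and by choosing these disks to be pairwise disjoint, we obtain a planar embedding of the simple graph $G'$.

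The main obstacle is verifying the equivalence ``$G$ is $3$-colourable if and only if $\textrm{va}_c(G') \leq 3/2$'' with only four midpoints per edge. In the backward direction, consider a $(3,2)$-tree-colouring $c$ of $G'$ and suppose toward contradiction that $c(x) = c(y) = a$ for some edge $xy \in E(G)$. Any pair $(z_{e,i}, z_{e,j})$ spans a $4$-cycle $x z_{e,i} y z_{e,j}$ in $G'$ which must not be contained in either of the two cyclic $2$-intervals of $\mathbb{Z}_3$ containing $a$; a short count of colour multiplicities among $z_{e,1}, \ldots, z_{e,4}$ under these two constraints produces a contradiction, so $c(x) \neq c(y)$, and $c|_{V(G)}$ is a proper $3$-colouring. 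In the forward direction, given a proper $3$-colouring $\phi$ of $G$, we colour each midpoint $z_{e,i}$ with the unique third colour $\{0,1,2\} \setminus \{\phi(x), \phi(y)\}$; deleting the original edges of $G$ is essential here, because it ensures that every edge of $G'$ is a gadget edge, and a case distinction on which endpoint of a gadget lies in a given $2$-colour preimage shows that the surviving midpoints always appear as pendant leaves attached to a single endpoint. Thus every $2$-colour preimage is a disjoint union of stars, hence a forest, completing the reduction.
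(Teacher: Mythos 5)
Your treatment of the first bullet coincides with the paper's: both identify $\vec{\chi}^\ast(D)\leq\frac{3}{2}$ with circular $\vec{C}(3,2)$-colourability via Proposition \ref{ckd}, note that $H_{\vec{C}(3,2)}=K_3$, and observe that the gadget $D_G$ of Theorem \ref{main}(i) (one reverse path of length $2$ per arc, closing each edge into a directed triangle) preserves planarity and simplicity, so planar $3$-colouring reduces directly.

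For the second bullet your reduction is correct, but the reason you give for departing from the paper rests on a false premise. The paper simply reuses the gadget $G_{3,2}$ of Theorem \ref{mainvert} verbatim, and that gadget does \emph{not} destroy planarity: a bundle of $2^3=8$ internally disjoint paths of length $2$ between the same two vertices is a subdivided multi-edge and can be drawn nested inside a thin disk around the original edge-curve --- exactly the embedding argument you invoke for your own four paths; the number of parallel paths is irrelevant to planarity, and simplicity holds because every path has an internal vertex. Likewise, ``deleting the original edge'' is not a modification: the paper's construction \emph{replaces} each edge by the path bundle, so the direct edge $xy$ is already absent. What your proposal does buy is a genuinely leaner gadget and a sharper correctness proof: instead of the generic pigeonhole over the $2^k-1$ nonempty colour subsets, you argue directly that if $c(x)=c(y)=a$ then each of the two cyclic $2$-intervals through $a$ can contain the colour of at most one midpoint (else some $4$-cycle $xz_{e,i}yz_{e,j}$ lies in a single $2$-interval preimage), and since these two intervals cover all of $\mathbb{Z}_3$ this caps the number of midpoints at two --- so in fact three midpoints per edge already suffice, not just four. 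Your forward direction (every surviving midpoint is pendant in each $2$-interval preimage, giving a union of stars) is also sound. So the alternative gadget is valid and smaller, but it is a refinement of convenience rather than a necessity, and you should not assert that the paper's construction fails to be planar.
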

\begin{proof}
Both problems are clearly in NP. Notice that $H_{\vec{C}(3,2)}=K_3$ for the graph defined in the proof of Theorem \ref{main} and $H(3,2)=K_3$ for the auxiliary graph defined in section \ref{ComplVertA}. It is easy to see that the digraph $D_G$ defined in the proof of Theorem \ref{main} as well as the graph $G_{3,2}$ as defined in the proof of Theorem \ref{mainvert} are both planar and simple for any planar and simple graph $G$. Deciding $K_3$-colourability of planar graphs can thus be polynomially reduced to each of the above decision problems. The NP-hardness of $3$-colourability of planar graphs (cf. \cite{3colouringplanar}) now yields the claim.
\end{proof}

It would be furthermore interesting to study the notion of circular homomorphisms in more detail. Natural questions consider for instance descriptions of the \emph{cores} of such homomorphisms, which enable the study of a corresponding \emph{homomorphism order}.
For graphs, this is a wide and active field of research, we refer to the book by \cite{hellnesbook} for a comprehensive survey of the topic. 

In this context, a graph $G$ is called a \emph{core} if it does not admit a graph homomorphism to a proper subgraph. Equivalently, one may define a core to be a graph $G$ such that every homomorphism $\phi:V(G) \rightarrow V(G)$ is a bijection. The interest in cores comes from their role as minimal representatives of homomorphic equivalence classes of graphs. \cite{uncol} considered the following corresponding definition of digraph cores: A digraph $D$ is called a \emph{core}, if every acyclic homomorphism $\phi:V(D) \rightarrow V(D)$ of $D$ to itself is a bijection. Similarly, we define a digraph $D$ to be a \emph{circular core} if any circular homomorphism $\phi:V(D) \rightarrow V(D)$ is bijective. We want to conclude the discussion of circular homomorphisms with some first observations concerning this notion. 
\begin{proposition}
\noindent
\begin{itemize}
\item A graph $G$ is a core if and only if $S(G)$ is a circular core.
\item If $D$ is a circular core, then $D$ is a core with respect to acyclic homomorphisms.
\item For any integers $k \ge d \ge 1$, the circulant digraph $\vec{C}(k,d)$ is a core if and only if $k$ and $d$ are coprime.
\end{itemize}
\end{proposition}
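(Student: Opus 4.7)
The first bullet is immediate from Proposition \ref{symmetricor}, which identifies graph endomorphisms of $G$ with circular endomorphisms of $S(G)$; bijectivity is preserved under this identification, so $G$ is a core if and only if $S(G)$ is a circular core. The second bullet follows at once from Proposition \ref{acyclich}: every acyclic homomorphism is a circular one, so if every circular endomorphism of $D$ is bijective then the same holds for every acyclic endomorphism.

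For the third bullet I would treat the two directions separately. For the ``not coprime $\Rightarrow$ not a core'' direction, write $g:=\gcd(k,d)>1$, $k=gk'$, $d=gd'$, and build a non-bijective acyclic endomorphism of $\vec{C}(k,d)$ as a composition $\iota\circ\psi$. The map $\iota:\mathbb{Z}_{k'}\to\mathbb{Z}_k$ given by $i\mapsto gi$ is (via the equivalence $(j-i)\bmod k'\ge d'\Leftrightarrow g(j-i)\bmod k\ge d$) an isomorphic embedding of $\vec{C}(k',d')$ into $\vec{C}(k,d)$, and being injective it is in particular an acyclic homomorphism. For $\psi$, Proposition \ref{homcolbojan} together with $\vec{\chi}_c(\vec{C}(k,d))=k/d=k'/d'$ furnishes an acyclic homomorphism $\psi:\vec{C}(k,d)\to\vec{C}(k',d')$. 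Since $\iota$ has singleton fibres, the fibres of $\iota\circ\psi$ coincide with those of $\psi$ and remain acyclic, so $\iota\circ\psi$ is an acyclic endomorphism of $\vec{C}(k,d)$ whose image has only $k'<k$ vertices.

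For the converse, assume $\gcd(k,d)=1$ and let $\Phi$ be any acyclic endomorphism of $\vec{C}(k,d)$. The plan is to sandwich the star dichromatic number of the induced subdigraph $D':=\vec{C}(k,d)[\Phi(V(\vec{C}(k,d)))]$ at $k/d$ and then apply the numerator bound in Theorem \ref{StarDichBasic}(i). Monotonicity of $\vec{\chi}^\ast$ under induced subdigraphs (immediate from the definition via acyclic $(k,d)$-colourings) gives $\vec{\chi}^\ast(D')\le\vec{\chi}^\ast(\vec{C}(k,d))=k/d$. Restricting the codomain, $\Phi$ is still an acyclic — hence by Proposition \ref{acyclich} also a circular — homomorphism $\vec{C}(k,d)\to D'$, so Proposition \ref{homandcol} yields $k/d=\vec{\chi}^\ast(\vec{C}(k,d))\le\vec{\chi}^\ast(D')$. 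Together these force $\vec{\chi}^\ast(D')=k/d$, a fraction in lowest terms by the coprimality hypothesis, and Theorem \ref{StarDichBasic}(i) then forces $|V(D')|\ge k$, so that $\Phi$ is surjective on $\mathbb{Z}_k$ and thus bijective. The main obstacle in my view is precisely this converse direction, where one must extract structural rigidity from the number-theoretic hypothesis; the crucial bridge is Theorem \ref{StarDichBasic}(i), since coprimality is exactly what makes $k$ the forced numerator of $\vec{\chi}^\ast(\vec{C}(k,d))$.
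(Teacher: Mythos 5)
Your first two bullets and the coprime direction of the third are essentially the paper's own argument: the paper likewise sandwiches $\vec{\chi}^\ast$ of the subdigraph induced by the image at $\frac{k}{d}$ via Proposition \ref{homandcol} and then invokes the numerator bound of Theorem \ref{StarDichBasic}(i) together with coprimality to force $|V(D')|\ge k$. For the non-coprime direction you take a genuinely different route: the paper exhibits the explicit map $i\mapsto\lfloor i/l\rfloor$, checks that it is an acyclic $(k',d')$-colouring of $\vec{C}(k,d)$ and hence, by Proposition \ref{ckd}, a circular homomorphism onto the embedded copy of $\vec{C}(k',d')$; you instead obtain an acyclic homomorphism $\psi:\vec{C}(k,d)\to\vec{C}(k',d')$ non-constructively from Proposition \ref{homcolbojan} and Theorem \ref{PropertiesCircularDi}(iii) and compose it with the embedding $i\mapsto gi$. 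Both are valid; yours has the small bonus of producing an \emph{acyclic} non-bijective endomorphism (so $\vec{C}(k,d)$ fails to be a core even in the acyclic-homomorphism sense), while the paper's is explicit and stays entirely within the circular-homomorphism framework.

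One point needs attention in your converse direction: you let $\Phi$ range over \emph{acyclic} endomorphisms only, which establishes that $\vec{C}(k,d)$ is a core with respect to acyclic homomorphisms. The notion at stake here (as the paper's own proof makes clear) is that of a \emph{circular} core, which is a priori stronger: a circular endomorphism need not be acyclic, so bijectivity of all acyclic endomorphisms does not by itself rule out a non-bijective circular one. Fortunately your argument never uses acyclicity of $\Phi$ except to convert it into a circular homomorphism via Proposition \ref{acyclich}; starting directly from an arbitrary circular endomorphism, the restriction of the codomain to $D'$ is still a circular homomorphism, Proposition \ref{homandcol} applies unchanged, and the rest is identical. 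So this is a quantification slip rather than a mathematical gap, but as written the converse proves a weaker statement than the one intended.
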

\begin{proof}
\noindent
\begin{itemize}
\item This is a direct consequence of Proposition \ref{symmetricor}.
\item This follows from Proposition \ref{acyclich}.
\item Assume for the first direction that $\textrm{gcd}(k,d)=l>1$, let $k':=\frac{k}{l}, d':=\frac{d}{l}$ and consider the mapping $\phi:\mathbb{Z}_k \rightarrow \mathbb{Z}_{k'}$ defined by $\phi(i):=\lfloor \frac{i}{l} \rfloor$ for all $i \in \{0,\ldots,k-1\} \simeq \mathbb{Z}_k$. We claim that this defines a circular homomorphism from $\vec{C}(k,d)$ to $\vec{C}(k',d')$. However, $\phi$ is easily seen to be an acylic $(k',d')$-colouring of $\vec{C}(k,d)$, and according to Proposition \ref{ckd}, this already means that $\phi$ is a circular homomorphism. As $\vec{C}(k',d')$ is isomorphic to the proper subdigraph of $\vec{C}(k,d)$ induced by the vertices $il, i=0,\ldots,k'-1$, this proves that $\vec{C}(k,d)$ is circularly homomorphic to a proper induced subdigraph and thus no circular core.

To prove the reverse, let $\textrm{gcd}(k,d)=1$ and assume that contrary to the assertion, $\vec{C}(k,d)$ admits a circular homomorphism $\phi:\vec{C}(k,d) \rightarrow \vec{C}(k,d)$ which is not bijective. Let $D$ be the subdigraph of $\vec{C}(k,d)$ induced by $\textrm{Im}(\phi)$. Then according to Proposition \ref{homandcol}, we have $\frac{k}{d}=\vec{\chi}^\ast(\vec{C}(k,d)) \leq \vec{\chi}^\ast(D) \leq \vec{\chi}^\ast(\vec{C}(k,d))=\frac{k}{d}$. However, referring to Theorem \ref{StarDichBasic}, (i), we also know that $\frac{k}{d}=\vec{\chi}^\ast(D)$ can be represented as a fraction with numerator at most $|V(D)|<|V(\vec{C}(k,d))|=k$. This finally contradicts the assumption that $k$ and $d$ are coprime, and we deduce the claimed equivalence.
\end{itemize}
\end{proof}

Moreover, it turns out that many famous theorems and problems for graph homomorphisms have directed analogues in terms of circular homomorphisms. For instance, the following is a well-known conjecture on homomorphism bounds for planar graphs of large girth:
\begin{conjecture}[cf. \cite{circflows}]
Any planar graph $G$ of girth at least $4k$ admits a homomorphism to $C_{2k+1}$. In other words, $\chi_c(G) \leq \frac{2k+1}{k}$.
\end{conjecture}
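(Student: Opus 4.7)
The plan is to attack this via planar duality. For a plane graph $G$ with dual $G^\ast$, it is classical that $G$ admits a homomorphism to $C_{2k+1}$ if and only if $G^\ast$ admits a nowhere-zero circular $(2+\tfrac{1}{k})$-flow; moreover, the minimum bond size of $G^\ast$ equals the girth of $G$. So the conjecture is equivalent to the planar special case of Jaeger's Circular Flow Conjecture: every $4k$-edge-connected planar graph admits a nowhere-zero $(2+\tfrac{1}{k})$-flow. The first step would be to set this translation up cleanly, since it is on the flow side that one has the strongest existing machinery.

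Second, I would take as a starting point the Lov\'asz--Thomassen--Wu--Zhang theorem, which shows the flow version for $6k$-edge-connected graphs (without any planarity assumption). The task is then to exploit planarity (via Euler's formula) to push $6k$ down to $4k$. The natural vehicle is a discharging argument on a hypothetical minimum counterexample $G^\ast$: assign initial charge $\deg(v)-4$ to each vertex and $\deg(f)-4$ to each face of a plane embedding of $G^\ast$, so that the total charge is $-8$ by Euler. By the $4k$-edge-connectivity hypothesis (equivalently, the girth-$4k$ hypothesis on $G$), all vertices of $G^\ast$ have degree at least $4k$, and one would need to rule out the presence of small faces/short cycles as reducible configurations.

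The reducibility step is the technical heart: one would want to exhibit a list of configurations such that, in any minimum counterexample, (i) one of them must occur by a discharging calculation, and (ii) any such configuration can be contracted or split off and the flow extended back via an inductive application of the conjecture to a smaller graph, yielding a contradiction. Here the idea would be to mimic the Dvo\v{r}\'ak--Postle style potential method that succeeds for small $k$ (giving, e.g., the result that planar graphs of girth $\geq 10$ map to $C_5$): define a potential function on subsets of edges of $G^\ast$ that tracks how far a partial flow can obstruct extension, and verify local modifications decrease the potential.

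The hard part will be exactly the reducibility analysis for general $k$: although the discharging bookkeeping forces a small face or low-degree vertex to exist, the known reducible configurations only suffice to close the gap for specific values of $k$ or under somewhat stronger girth hypotheses, and finding a uniform family of configurations (or a uniform potential function) that works for every $k$ is precisely the obstacle that has kept this conjecture open. A realistic weaker target would be to match the best known bounds (girth $\geq \tfrac{20k-2}{3}$ or similar) rather than the conjectured $4k$; matching $4k$ is genuinely open and I would not expect to resolve it here.
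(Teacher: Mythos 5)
The statement you are trying to prove is presented in the paper as a \emph{conjecture} (the planar case of Jaeger's circular flow conjecture, stated in its dual homomorphism form); the paper offers no proof of it and only uses it as motivation for a directed analogue. So there is no argument of the authors to compare yours against, and the only question is whether your proposal closes the problem. It does not, and you say so yourself: the entire content of the conjecture sits in the reducibility analysis of your third and fourth paragraphs, which you leave open. Your first two steps are correct but purely translational: the equivalence between homomorphisms $G \to C_{2k+1}$ and nowhere-zero circular $(2+\frac{1}{k})$-flows on the planar dual, and the identification of girth with minimum bond size, are classical; and the Lov\'asz--Thomassen--Wu--Zhang theorem indeed gives the flow statement under $6k$-edge-connectivity, hence the homomorphism statement for planar graphs of girth at least $6k$. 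None of this produces the bound $4k$.

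Two further cautions. First, the discharging setup you describe (charges $\deg(v)-4$ and $\deg(f)-4$ summing to $-8$) only tells you that a minimum counterexample contains a vertex of degree less than $4$ or a face of length less than $4$ in the dual; since all dual vertices have degree at least $4k$, the entire negative charge must be absorbed by faces, and you have not identified any reducible configuration, let alone a uniform family working for all $k$. This is precisely where every known attempt stalls, which is why the best published bounds remain of the form girth at least roughly $6k$ (or $\frac{20k-2}{3}$ from the older Borodin--Kim--Kostochka--West argument), with the conjectured $4k$ verified for no $k\ge 2$. Second, the non-planar version of Jaeger's conjecture ($4k$-edge-connectivity suffices for a circular $(2+\frac{1}{k})$-flow) has been \emph{disproved} for $k\ge 3$ by Han, Li, Wu and Zhang, so any correct proof must exploit planarity in an essential, global way; a reduction to connectivity-based flow machinery alone is provably a dead end. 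In short: your framing is accurate and your literature pointers are sound, but what you have written is a research plan whose decisive step is acknowledged to be open, not a proof.
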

This conjecture contains as a special case for $k=1$ the famous Theorem of Gr\"{o}tzsch stating that every planar $C_3$-free graph admits a homomorphism to $C_3$, i.e., is $3$-colourable. The following directed analogue of this conjecture was posed by \cite{hochsteiner}:
\begin{conjecture}
Any planar digraph $D$ of directed girth at least $k \ge 3$ admits a circular homomorphism to $\vec{C}_{k-1}$. In other words, $\vec{\chi}^\ast(D) \leq \frac{k-1}{k-2}$.
\end{conjecture}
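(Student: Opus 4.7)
The plan is to first reformulate the conjecture as a concrete colouring problem. By Proposition~\ref{ckd}, $\vec{\chi}^\ast(D) \leq \frac{k-1}{k-2}$ is equivalent to the existence of an acyclic $(k-1,k-2)$-colouring of $D$, since $\vec{C}(k-1,k-2)$ is precisely $\vec{C}_{k-1}$ (the single value $k-2 \equiv -1 \pmod{k-1}$ yields a backward directed cycle). Because a cyclic subinterval of length $k-2$ in $\mathbb{Z}_{k-1}$ has complement of size exactly one, such a colouring is a map $c:V(D)\to\{0,1,\ldots,k-2\}$ with the property that removing any single colour class leaves an acyclic subdigraph, or equivalently that every directed cycle of $D$ must meet \emph{all} $k-1$ colour classes. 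So the task is to partition $V(D)$ into $k-1$ feedback vertex sets, using planarity and the assumption that every directed cycle has length at least~$k$.

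The natural approach is induction on $|V(D)|$ combined with a discharging argument on a minimum counterexample. The directed girth hypothesis together with planarity, applied via Euler's formula to the underlying undirected multigraph, forces a low average degree; one hopes to extract from this a \emph{reducible configuration}---a local substructure whose deletion yields a smaller planar digraph of directed girth at least~$k$, and for which any acyclic $(k-1,k-2)$-colouring of the smaller digraph can be extended to the full one. Natural first candidates are vertices of small total degree, short directed paths whose interior consists of in-/out-degree-one vertices, and short undirected paths between vertices at large directed distance. Before attacking the full conjecture, I would verify it for large $k$, where the girth constraint is very restrictive and discharging has abundant slack, and for structured subclasses such as planar digraphs whose underlying graph is bipartite or series-parallel.

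The main obstacle is that the constraint ``every directed cycle meets every colour class'' is highly non-local: recolouring a reducible configuration can preserve the acyclicity of every neighbourhood-induced subdigraph while destroying it on a long directed cycle passing far away. This global nature is precisely what makes the case $k=3$---which is exactly the Neumann--Lara conjecture that every digon-free planar digraph has dichromatic number at most $2$---a long-standing open problem; the present conjecture is therefore at least as hard, and any real proof must incorporate a new idea of this magnitude. A promising refinement, which I would pursue if the plain discharging attempt stalls, is to prove the stronger list version: assign to each vertex a list of $k-1$ permitted colours and require an acyclic $(k-1,k-2)$-colouring respecting the lists. The added flexibility is typically what enables reducible configurations to survive the global constraint, and a dual formulation in terms of directed circular flows---mirroring the classical flow/colour duality used by Jaeger in the undirected analogue---may supply the missing certificate for extending partial colourings.
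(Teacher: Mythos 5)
There is no proof to compare against: the statement you are addressing is an open \emph{conjecture} in the paper (attributed to Hochst\"attler and Steiner), and the paper itself offers no argument for it --- indeed it points out that the case $k=3$ is exactly the Neumann-Lara 2-Colour-Conjecture, a long-standing open problem. Your proposal correctly recognises this, and the one piece of genuine mathematics in it is sound: by Proposition~\ref{ckd} the claim $\vec{\chi}^\ast(D) \leq \frac{k-1}{k-2}$ is equivalent to an acyclic $(k-1,k-2)$-colouring of $D$, i.e.\ to a partition of $V(D)$ into $k-1$ parts each of which is a feedback vertex set (every directed cycle meets every colour class), and $\vec{C}(k-1,k-2)$ is indeed the directed cycle $\vec{C}_{k-1}$. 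That reformulation is correct and matches how the paper phrases the conjecture.

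Everything after that is a research plan, not a proof, and the gap is total rather than local: no reducible configuration is exhibited, no discharging rules are given, and no mechanism is proposed for extending a colouring of a smaller digraph across a configuration while respecting directed cycles that pass far from it. You identify this non-locality yourself as the main obstacle, and it is precisely the obstruction that has kept even the $k=3$ case open; the suggested escape routes (list versions, a flow/colouring duality in the spirit of Jaeger) are plausible directions but are asserted, not carried out --- and for the list version you would additionally need to check that the stronger statement is not simply false, which is a real risk for list analogues of colouring conjectures. In short, the proposal contains a correct translation of the problem and an honest assessment of its difficulty, but no proof, and none should be expected, since the statement remains open.
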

The case $k=3$ here corresponds to the so-called \emph{2-Colour-Conjecture} posed by Neumann-Lara, while $k=4$ means the following directed analogue of Gr\"{o}tzsch' result: Every oriented planar $\vec{C}_3$-free digraph admits a circular homomorphism to $\vec{C}_3$. An analogue of the above conjecture using acyclic homomorphisms however is not possible: There are planar digraphs with arbitrarily large digirth but circular dichromatic number $2$. Examples are again given by oriented wheels of arbitrary size with a dominating source whose deletion results in a directed cycle.

Closely related to the fractional chromatic number of a graph is the notion of \emph{$b$-colourings} of graphs defined as follows: For fixed $b,k \in \mathbb{N}$ with $b \leq k$, a $(k,b)$-colouring of a graph $G$ is an assignment of subsets $B_v \subseteq \{1,\ldots,k\}, v \in V(G)$ of size $b$ to the vertices in such a way that $B_x \cap B_y=\emptyset$ for adjacent vertices $x,y \in V(G)$. The \emph{$b$-chromatic number} $\chi_b(G)$ of $G$ is then defined as the least $k$ for which a $(k,b)$-colouring exists. It is a well-known fact that $\chi_f(G)=\lim_{b \rightarrow \infty}{\frac{\chi_b(G)}{b}}=\inf_{b \ge 1}{\frac{\chi_b(G)}{b}}$ for any loopless graph $G$ (cf. \cite{kneser}).

Regarding the relation of $b$-colourings of graphs and their fractional chromatic number, it is natural to ask for a similar notion corresponding to the fractional dichromatic number. Such a notion was defined by \cite{doct} as follows: Given any natural numbers $k \ge b$, a \emph{$b$-tuple $k$-colouring} of a digraph $D$ is defined to be an assignment of subsets $B_v \subseteq \{1,\ldots,k\}, v \in V(D)$ of size $b$ to the vertices in such a way that for any $i \in \{1,\ldots,k\}$, the subdigraph of $D$ induced by the vertices having $i$ in their colour set, is acyclic. It can again be shown (cf. \cite{doct}, Theorem 5.1) that for any digraph $D$, $\vec{\chi}_f(D)$ is the minimal fraction $\frac{k}{b}$ for which a $b$-tuple $k$-colouring of $D$ exists. \\
For $b$-colourings of graphs, a very useful reformulation in terms of homomorphisms to the Kneser graphs $K(k,b)$ is known: For any $k, b \in \mathbb{N}, k \ge b$, a graph $G$ admits a $(k,b)$-colouring if and only if there is a graph homomorphism mapping $G$ to $K(k,b)$ (cf. \cite{kneser}, Proposition 3.2.1).

It would be interesting to find a directed version of Kneser graphs yielding an analogous reformulation of $b$-tuple colourings of digraphs. To be more precise, we conclude with the following question.
\begin{problem}
For given $k, b \in \mathbb{N}$ with $k \ge b$, is there a directed graph $\vec{K}(k,b)$ with vertex set $\binom{[k]}{b}$ such that the following holds?\\
The subdigraph of $\vec{K}(k,b)$ induced by any $\{B_1,\ldots,B_l\} \subseteq \binom{[k]}{b}$ is acylic if and only if $\bigcap_{i=1}^{l}{B_i} \neq \emptyset$.
\end{problem}
For instance, whenever $k=b+1$, such a digraph is given by the directed cycle of length $b+1$ whose vertices are associated with the sets $\{1,\ldots,b+1\}\setminus\{i\}$ for $i=1,\ldots,b+1$.

If such a ``directed Kneser graph'' $\vec{K}(k,b)$ for the parameters $(k,b)$ exists, we have the following reformulation of $b$-tuple $k$-colourings in terms of circular homomorphisms: \\
A digraph $D$ is $b$-tuple $k$-colourable if and only if there is a circular homomorphism mapping $D$ to $\vec{K}(k,b)$.

\bibliographystyle{abbrvnat}
\bibliographystyle{alpha}
\bibliography{references}
\label{sec:biblio}

\end{document}